\documentclass[12pt]{amsart}

\usepackage[colorlinks,
linkcolor=blue,
anchorcolor=blue,
citecolor=blue]{hyperref}
\usepackage{epsfig}
\usepackage{graphicx}
\usepackage{amssymb, amstext, amscd, amsmath}
\usepackage{amsthm, mathrsfs, amsfonts,dsfont}
\usepackage{fullpage}
\usepackage{txfonts}
\usepackage{fancybox}
\usepackage{color}
\usepackage{cite}
\usepackage{comment}
\usepackage{tikz-cd} 

\allowdisplaybreaks

\newtheorem{theorem}{Theorem}[section]
\newtheorem{lemma}{Lemma}[section]
\newtheorem{proposition}{Proposition}[section]
\newtheorem{corollary}{Corollary}[section]

\theoremstyle{definition}
\newtheorem{definition}{Definition}[section]

\newtheorem{remark}{Remark}[section]

\numberwithin{equation}{section}

\begin{document}
\title {Numerical Invariants and Parameter Recovery for Quasi-Homogeneous Submodules $[z^k-w^\ell]$}
\author{Yin Liu}
\address{School of Mathematics and Statistics, Nanyang Normal University,
Nanyang, Henan, 473061, P. R. China} 
\email{lylight@mail.bnu.edu.cn}

\author{Yufeng Lu}
\address{School of Mathematical Sciences, Dalian University of Technology,
Dalian, Liaoning, 116024, P. R. China}
\email{lyfdlut@dlut.edu.cn}

\author{Yixin Yang*}
\address{School of Mathematical Sciences, Dalian University of Technology,
Dalian, Liaoning, 116024, P. R. China}
\email{yangyixin@dlut.edu.cn}

\subjclass[2020]{Primary 47A13; Secondary 46E22, 47B32}
\thanks{*Corresponding author.}
\keywords{Hardy module over the bidisk;
quasi-homogeneous submodule;
Yang's numerical invariants;
defect spaces; parameter recovery.}

\begin{abstract}
We study Yang's higher numerical invariants for the quasi-homogeneous
submodules
\[
M_{k,\ell}=[z^k-w^\ell]\subset H^2(\mathbb D^2),
\qquad
k,\ell\in\mathbb N,\quad k\neq\ell.
\]
The Hilbert--Schmidt property and the low-order invariants
$\Sigma_0$ and $\Sigma_1$ for this family follow from earlier results
on $M_{\theta,\varphi}$-type submodules.  Our aim is to determine the
complete higher-order sequence.  Using the weighted homogeneous
decomposition associated with $z^k-w^\ell$, we construct explicit
orthonormal bases for the two defect spaces and determine the exact
support of the shifted defect inner products.  For $j\geq1$, set
\[
A_j=\left\lceil\frac{j}{\ell}\right\rceil,
\qquad
B_j=\left\lceil\frac{j}{k}\right\rceil.
\]
We obtain
\[
\Sigma_0(M_{k,\ell})=\frac{\pi^2}{6},
\qquad
\Sigma_j(M_{k,\ell})=F(A_j,B_j),
\]
where $F$ is an explicitly evaluated symmetric function on
$\mathbb N^2$. The sequence is nonincreasing,
tends to zero, and satisfies $\Sigma_j>\Sigma_{j+1}
\Longleftrightarrow
k\mid j\ \text{or}\ \ell\mid j$.
Hence its strict descent set is $k\mathbb N\cup\ell\mathbb N$.
This descent set determines the additive semigroup
$\langle k,\ell\rangle$, while the complete numerical invariant
sequence determines the unordered pair $\{k,\ell\}$.  The order of
the two exponents is not recoverable because of coordinate-swap
symmetry.
\end{abstract}

\maketitle

\section{Introduction}
\label{sec:introduction}

Numerical invariants associated with submodules of the Hardy module
$H^2(\mathbb D^2)$ provide a quantitative way to record the interaction
between the two coordinate shifts.  Their operator-theoretic origin is
closely related to the restriction pair on a submodule, the corresponding
defect spaces, and the core operator; see
\cite{Yang2001,GuoYang2004,Yang2005,Yang2019}.
Although the low-order invariants \(\Sigma_0\) and \(\Sigma_1\) have been studied for several broad classes of Hilbert--Schmidt submodules, explicit formulas for the higher invariants \(\Sigma_j\), \(j\ge2\), and hence for the complete invariant sequence, are known only in a few structured situations.  In particular, the homogeneous setting has played a central
role because the usual degree decomposition reduces the relevant
operator matrices and inner products to finite-dimensional problems. Among the settings in which the numerical invariants can be computed
explicitly, homogeneous principal submodules provide the most developed
class and serve as the natural starting point for the present work.


For a homogeneous principal submodule $[p]$, Azari Key, Lu and Yang
developed a systematic method for studying numerical invariants through
the Gram matrices of the homogeneous defect spaces and the associated
Toeplitz determinants \cite{AzariLuYang2017}.  More recently, several
explicit families have revealed substantially different behaviors.
For the repeated-factor submodule $[(z-w)^2]$, explicit formulas for the
complete sequence were obtained and the monotonicity of Yang's
invariants was verified \cite{LiuLuZu2026square}.  This analysis has
subsequently been extended to the entire repeated-factor family $[(z-w)^r], r\geq1$,
for which the numerical invariant sequence is strictly decreasing
\cite{LiuLuZu2026power}.  On the other hand, the family $[z^r-w^r], r\geq2$,
exhibits a completely different phenomenon: the associated Toeplitz
matrices split into residue-class chains and the numerical invariants
satisfy the block-repetition formula $\Sigma_j([z^r-w^r])
=
\Sigma_{\lceil j/r\rceil}([z-w]), j\geq1$;
see \cite{LiuLuZu2026block}.  Thus repeated factors and distinct
homogeneous factors lead to markedly different invariant sequences.

We consider the quasi-homogeneous family
\[
M_{k,\ell}=[z^k-w^\ell],
\qquad
k,\ell\in\mathbb N,\quad k\neq\ell.
\]
Unlike the homogeneous case, $z^k-w^\ell$ is not homogeneous with
respect to the ordinary total degree, so the usual degree
decomposition does not align the two defect spaces.  Accordingly,
the single-index expression arising from homogeneous degree matching
must be replaced by the general double-index formulation of Yang's
numerical invariants.

The polynomial $z^k-w^\ell$, however, retains a weighted homogeneity.
Indeed, the circle action
\[
(U_\theta f)(z,w)
=
f(e^{i\ell\theta}z,e^{ik\theta}w)
\]
satisfies
\[
U_\theta(z^k-w^\ell)
=
e^{ik\ell\theta}(z^k-w^\ell).
\]
This quasi-homogeneous symmetry has appeared previously in the study of
quotient modules.  In particular, Guo, Wang and Zhang considered the
quotient $[z^k-w^\ell]^\perp$ for coprime integers $k,\ell>1$ and used
the above weighted rotation to obtain a one-variable model and weak
trace-class information \cite{GuoWangZhang2012}.  Quasi-homogeneous
quotient modules over the polydisk have also been investigated from the
viewpoint of essential normality; see \cite{GuoWang2012}.  The problem
addressed here is different: we study Yang's complete numerical invariant
sequence on the submodule side and do not impose a coprimality
assumption on $k$ and $\ell$.

The family considered here is also naturally related to a broader line of
work on Hilbert--Schmidt submodules. Such properties have been studied for
several classes of finitely generated and inner-function-related
submodules; see, for example,
\cite{LuoIzuchiYang2018,ZuYangLu2023,ZuLu2025}.
Of particular relevance here are submodules generated by functions of the
form $\psi(z)-\varphi(w)$.
Zou, Yu and Yang studied the submodules $M_{\psi,\varphi}=[\psi(z)-\varphi(w)]$
and computed, among other quantities, the Hilbert--Schmidt norm of the
associated core operator and several compressed commutators
\cite{ZouYuYang2022}. Hilbert--Schmidtness was subsequently studied for
finitely generated submodules containing
$\theta(z)-\varphi(w)$,
where $\theta$ and $\varphi$ are finite Blaschke products
\cite{ZuYangLu2023}. A further extension showed that
$[q(\theta(z),\varphi(w))]$
is Hilbert--Schmidt whenever $q$ is a homogeneous polynomial and
$\theta,\varphi$ are inner functions \cite{LuZuYang2023}. In particular,
taking
\[
q(u,v)=u-v,\qquad
\theta(z)=z^k,\qquad
\varphi(w)=w^\ell
\]
gives
$[q(\theta(z),\varphi(w))]=[z^k-w^\ell]=M_{k,\ell}$,
so the Hilbert--Schmidt property of the submodules considered here is
already covered by this general theory.

More recently, Zu and Lu established transformation formulas for the
low-order numerical invariant functions of
$M_{\theta,\varphi}$-type submodules \cite{ZuLu2025}.  Applied to
$M=[z-w]$, $\theta(z)=z^k$, and $\varphi(w)=w^\ell$, these formulas
recover the low-order invariants of $M_{k,\ell}$.  The new
problem therefore begins at order $j\geq2$.

Although the general $M_{\theta,\varphi}$-type theory provides
Parseval-frame descriptions of the defect spaces, the monomial
quasi-homogeneous setting admits a more rigid weighted decomposition.
Each nonzero weighted defect component is one-dimensional, and its
distinguished monomial chain has a tridiagonal Gram matrix with
diagonal entries $2$ and adjacent entries $-1$.  This yields explicit
orthonormal bases and reduces the higher shifted inner products to a
finite second-difference calculation, allowing us to determine both
their exact support and their values.

We now state the two main results of the paper. For $j\geq1$, set $A_j=\left\lceil\frac{j}{\ell}\right\rceil$,
$B_j=\left\lceil\frac{j}{k}\right\rceil$,
and define
\[
F(\mu,\nu)
=
\sum_{r=0}^{\infty}
\frac{(r+1)^2}
{(r+\mu)(r+\mu+1)(r+\nu)(r+\nu+1)},
\qquad
\mu,\nu\in\mathbb N.
\]
The function $F$ is symmetric in its two variables.

\begin{theorem}[Complete invariant sequence and staircase structure]
\label{thm:intro-complete}
Let
\[
M_{k,\ell}=[z^k-w^\ell],
\qquad
k,\ell\in\mathbb N,
\qquad
k\ne\ell.
\]
Then
\begin{equation*}
 \Sigma_0(M_{k,\ell})
 =
 \frac{\pi^2}{6},
 \qquad
 \Sigma_j(M_{k,\ell})
 =
 F(A_j,B_j),
 \quad j\geq1.
 \end{equation*}
In particular,
$\Sigma_1(M_{k,\ell})=\frac{\pi^2}{6}-1$.
Moreover, the sequence
$\{\Sigma_j(M_{k,\ell})\}_{j\geq0}$
is nonincreasing, tends to zero, and for every $j\geq1$,
\begin{equation*}
\Sigma_j(M_{k,\ell})>\Sigma_{j+1}(M_{k,\ell})
\quad\Longleftrightarrow\quad
k\mid j\ \text{or}\ \ell\mid j.
\end{equation*}
Hence the strict descent set is
\[
D_{k,\ell}
=
\{j\geq1:
\Sigma_j(M_{k,\ell})>\Sigma_{j+1}(M_{k,\ell})\}
=
k\mathbb N\cup\ell\mathbb N.
\]
\end{theorem}

Viewed together with the diagonal result of~\cite{LiuLuZu2026block}, the two-parameter
formula provides a unified description of the family $[z^k-w^\ell]$.
Indeed, in the specialization $k=\ell=K$, one has
\[
A_j=B_j=\left\lceil\frac{j}{K}\right\rceil,
\]
and hence
\[
\Sigma_j([z^K-w^K])
=
F\left(
\left\lceil\frac{j}{K}\right\rceil,
\left\lceil\frac{j}{K}\right\rceil
\right)
=
\Sigma_{\lceil j/K\rceil}([z-w]),
\qquad j\geq1.
\]
Thus the fixed-length block repetition in the homogeneous case
appears as the diagonal specialization of the two-parameter
staircase.  When $k\neq\ell$, the two ceiling parameters separate and
produce a genuinely quasi-homogeneous staircase structure.


The descent pattern in Theorem~\ref{thm:intro-complete} also carries
arithmetic information. The following result describes precisely what
can be recovered from the descent set and from the full numerical
invariant sequence.

\begin{theorem}[Arithmetic and parameter recovery]
\label{thm:intro-recovery}
Let
\[
M_{k,\ell}=[z^k-w^\ell],
\qquad
k,\ell\in\mathbb N,
\qquad
k\ne\ell.
\]
Then the following assertions hold.

\begin{enumerate}
\item
The descent set
$D_{k,\ell}=k\mathbb N\cup\ell\mathbb N
$
uniquely determines the additive semigroup
$\langle k,\ell\rangle=
\{rk+s\ell:r,s\in\mathbb N_0\}$.

\item
If neither $k$ divides $\ell$ nor $\ell$ divides $k$, then
$D_{k,\ell}$ already determines the unordered pair $\{k,\ell\}$.

\item
In all cases, the complete numerical invariant sequence
$\{\Sigma_j(M_{k,\ell})\}_{j\geq0}$
uniquely determines the unordered pair $\{k,\ell\}$.

\item
For every $j\geq0$,
$\Sigma_j([z^k-w^\ell])=
\Sigma_j([z^\ell-w^k])$.
Consequently, the complete numerical invariant sequence cannot
distinguish the ordered pairs $(k,\ell)$ and $(\ell,k)$.
\end{enumerate}
\end{theorem}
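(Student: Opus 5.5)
Theorem~\ref{thm:intro-complete} gives everything needed, so the plan is arithmetic. Throughout, put $D=D_{k,\ell}=k\mathbb N\cup\ell\mathbb N$ and write $k<\ell$ only after the unordered pair is fixed.

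For (1), we read off the minimal elements of $D$. The smallest element of $D$ is $m_1=\min(k,\ell)$. Let $m_2$ be the smallest element of $D$ that is not a multiple of $m_1$, if one exists. If no such element exists, then every multiple of the larger exponent is a multiple of $m_1$, so $m_1$ divides $\max(k,\ell)$ and $\langle k,\ell\rangle=m_1\mathbb N_0$. Otherwise the larger exponent is not a multiple of $m_1$. Every element of $D$ that is not a multiple of $m_1$ is then a multiple of $\max(k,\ell)$, so $m_2=\max(k,\ell)$. In both cases the semigroup is generated by $m_1$ together with $m_2$ (when present), and $D$ determines both. This also proves (2): if neither exponent divides the other, then $m_2$ exists and $\{k,\ell\}=\{m_1,m_2\}$.

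For (3) we may suppose, say, $k\mid \ell$ with $k<\ell$, so $D=k\mathbb N$ and $\ell$ is invisible in the descent set. We recover it from the values instead. The smaller exponent $k=m_1$ is known, and for each candidate $\ell\in k\mathbb N$ we compare sequences using $\Sigma_j=F(\lceil j/\ell\rceil,\lceil j/k\rceil)$.

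The key input is that $F$ is strictly decreasing in each argument, which can be seen termwise from the series. Consider two candidates $\ell<\ell'$, both multiples of $k$. At $j=\ell+1$ we have:
\begin{itemize}
\item the second argument is the same for both candidates;
\item the first argument is $\lceil (\ell+1)/\ell\rceil=2$ for $\ell$;
\item the first argument is $\lceil (\ell+1)/\ell'\rceil=1$ for $\ell'$, since $\ell+1\le \ell'$.
\end{itemize}
Hence the two values of $\Sigma_{\ell+1}$ differ, and the candidates are separated.

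The case $k=\ell$ is excluded by hypothesis, so there is nothing to check there. Combined with (2), this gives (3). The main thing to verify is the strict monotonicity of $F$, which is immediate because each summand is strictly decreasing in $\mu$ and in $\nu$.

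For (4), swapping the exponents interchanges $A_j$ and $B_j$. Since $F$ is symmetric, the formula of Theorem~\ref{thm:intro-complete} gives identical values for all $j$, including $\Sigma_0=\pi^2/6$. Alternatively, the coordinate swap $f(z,w)\mapsto f(w,z)$ is a unitary on $H^2(\mathbb D^2)$ carrying $[z^k-w^\ell]$ onto $[z^\ell-w^k]$ and interchanging the shifts. The invariants should be symmetric under this swap, but one should check that the definition treats the two defect spaces symmetrically; the formula route avoids that issue entirely.
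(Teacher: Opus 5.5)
Your proposal is correct and follows essentially the paper's route. Parts (1)–(2) go through $\min D$ and $\min(D\setminus a\mathbb N)$, part (3) rests on strict monotonicity of $F$ in the divisibility case, and part (4) on the symmetry of $F$. The only difference is in (3): you prove injectivity by comparing two candidate exponents at $j=\ell+1$. The paper instead packages that comparison as an explicit recovery formula $q=\max\{r\geq1: V_r=G_r\}$, where $V_r=F(r,\lceil r/q\rceil)$ are the block values and $G_r=F(r,1)$ are reference values.
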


Thus the descent pattern determines the semigroup
$\langle k,\ell\rangle$, while the full invariant sequence recovers
the unordered pair $\{k,\ell\}$.  Coordinate-swap symmetry shows that
this is optimal: the ordered pair cannot be recovered.

The paper is organized as follows. Section~\ref{sec2} fixes the basic notation,
records the general double-index form of Yang's numerical invariants,
and develops the weighted decomposition adapted to $z^k-w^\ell$.
Section~\ref{sec3} proves Theorem~\ref{thm:intro-complete}: we construct
explicit orthonormal bases for the two defect spaces, determine the
exact support and values of the shifted defect inner products, derive
the complete invariant formula, and establish the resulting
two-parameter staircase structure. Section~\ref{sec4} proves
Theorem~\ref{thm:intro-recovery} by analyzing the information contained
in the descent set and in the numerical values of the constant blocks.

\section{Preliminaries}
\label{sec2}

We recall the operator-theoretic notation and the weighted
decomposition needed below.  In contrast with the homogeneous case,
the quasi-homogeneous calculation requires the general double-index
form of Yang's numerical invariants.

\subsection{Notation and basic setup}

We write $\mathbb N=\{1,2,3,\ldots\},
\mathbb N_0=\{0,1,2,3,\ldots\}$.
For $x\in\mathbb R$, we denote by $\lfloor x\rfloor$ and
$\lceil x\rceil$ the floor and ceiling of $x$, respectively. We shall use
the elementary identity
$\lfloor N-x\rfloor=N-\lceil x\rceil, N\in\mathbb Z$.

Let
$\mathbb D^2=
\{(z,w)\in\mathbb C^2:|z|<1,\ |w|<1\}$,
and let $H^2(\mathbb D^2)$ be the Hardy space on the bidisk. Every
$f\in H^2(\mathbb D^2)$ has a unique expansion
\[
f(z,w)
=
\sum_{a,b\ge0}\widehat f(a,b)z^aw^b,
\]
where $\widehat f(a,b)$ denotes the Fourier coefficient of $f$
corresponding to the monomial $z^aw^b$. Equivalently,
$\widehat f(a,b)$ is the coefficient of $z^aw^b$ in the standard
monomial expansion of $f$. Moreover,
$\|f\|^2=
\sum_{a,b\ge0}|\widehat f(a,b)|^2$.
Thus
$\{z^aw^b:a,b\in\mathbb N_0\}$
is the standard orthonormal basis of $H^2(\mathbb D^2)$.

We refer to \cite{ChenGuo2003,DouglasPaulsen1989,Rudin1969}
for general background on Hardy spaces over the polydisk and Hilbert
modules.

\subsection{Defect spaces and numerical invariants}

Let $T_z$ and $T_w$ denote multiplication by the coordinate functions.
A closed subspace
$M\subset H^2(\mathbb D^2)$
is called a submodule if
$T_zM\subset M,
T_wM\subset M$.

For $f\in H^2(\mathbb D^2)$, we denote by $[f]=
\overline{
\{gf:g\in\mathbb C[z,w]\}
}^{\,H^2(\mathbb D^2)}$
the principal submodule generated by $f$.  Equivalently, $[f]$ is the
smallest closed submodule of $H^2(\mathbb D^2)$ containing $f$.

For a submodule $M$, the restrictions
$R_z=T_z|_M,
R_w=T_w|_M$
are commuting isometries on $M$. Their defect spaces are
\[\mathcal E_z(M)=
M\ominus zM=
\ker R_z^*\]
and
\[\mathcal E_w(M)=
M\ominus wM=
\ker R_w^*.\]
When there is no danger of confusion, we write simply
$\mathcal E_z=\mathcal E_z(M),
\mathcal E_w=\mathcal E_w(M)$.
The corresponding orthogonal projections are
\[P_z=I_M-R_zR_z^*,\qquad
P_w=I_M-R_wR_w^*.\]
Since $R_z$ and $R_w$ are isometries,
$P_z=[R_z^*,R_z],
P_w=[R_w^*,R_w]$.

Yang's numerical invariants arise from the interaction of the two
defect spaces $\mathcal E_z$ and $\mathcal E_w$; see
\cite{Yang2001,Yang2019}.  For the present quasi-homogeneous problem,
it is important to retain their general double-index form rather than
the single-index expression available after homogeneous degree matching.

Let
$\{\phi_n:n\in I\}$ and $\{\psi_m:m\in J\}
$
be arbitrary orthonormal bases of $\mathcal E_z$ and $\mathcal E_w$,
respectively.  For $j\geq0$, consider
\begin{equation}
\label{eq:general-double-sum}
\Sigma_j(M)
=
\sum_{n\in I}\sum_{m\in J}
\left|
\langle w^j\phi_n,z^j\psi_m\rangle
\right|^2,
\end{equation}
where the value $+\infty$ is allowed at this stage.  For the
submodules considered in this paper, the finiteness of these quantities
will follow from the explicit formulas obtained in Section~\ref{sec3}.

To connect \eqref{eq:general-double-sum} with its operator-theoretic
formulation, define, for $j\geq0$,
\[
X_j
=
P_wR_z^{*j}R_w^j\big|_{\mathcal E_z}
:
\mathcal E_z\longrightarrow \mathcal E_w.
\]
Since $\psi_m\in \mathcal E_w=\operatorname{ran}P_w$, we have
\[
\begin{aligned}
\langle X_j\phi_n,\psi_m\rangle=
\left\langle
P_wR_z^{*j}R_w^j\phi_n,\psi_m
\right\rangle=
\left\langle
R_w^j\phi_n,R_z^j\psi_m
\right\rangle
=
\langle w^j\phi_n,z^j\psi_m\rangle .
\end{aligned}
\]
Hence, by Parseval's identity in $E_w$,
\[
\Sigma_j(M)
=
\sum_{n\in I}\sum_{m\in J}
|\langle X_j\phi_n,\psi_m\rangle|^2
=
\sum_{n\in I}\|X_j\phi_n\|^2.
\]
Thus $\Sigma_j(M)<\infty$ if and only if $X_j$ is a Hilbert--Schmidt operator,
and in that case $\Sigma_j(M)=\|X_j\|_{\mathcal S_2}^2$.
Here $\mathcal S_2$ denotes the Hilbert--Schmidt class. In particular, \eqref{eq:general-double-sum} is independent of the
particular choices of orthonormal bases of $\mathcal E_z$ and $\mathcal E_w$.

For the two lowest orders, the general formulation above reduces to
the familiar defect and cross-commutator expressions. Since $P_z$
is the orthogonal projection onto $\mathcal E_z$, $P_wP_z$ is the extension of
$X_0=P_w|_{\mathcal E_z}$ by zero to $M$, and hence
$\Sigma_0(M)=
\|P_zP_w\|_{\mathcal S_2}^2$.

Likewise,
\[
[R_z^*,R_w]=[R_z^*,R_w]P_z,
\qquad
\operatorname{ran}[R_z^*,R_w]\subset \mathcal E_w,
\]
so that
$X_1=[R_z^*,R_w]\big|_{\mathcal E_z}$
and therefore
$\Sigma_1(M)
=
\|[R_z^*,R_w]\|_{\mathcal S_2}^2$.

For a principal submodule $[p]$ generated by a homogeneous polynomial
$p$, the nonzero homogeneous components of the two defect spaces are
one-dimensional.  Hence one may choose homogeneous orthonormal bases
$\{\phi_n\}_{n\geq0}\subset\mathcal E_z,
\{\psi_n\}_{n\geq0}\subset\mathcal E_w$,
so that $\phi_n$ and $\psi_n$ have matching total degrees.
Orthogonality of distinct homogeneous components then yields
$\langle w^j\phi_n,z^j\psi_m\rangle=0, \,\,(n\neq m)$,
and therefore
\[
\Sigma_j(M)
=
\sum_{n\geq0}
\left|
\langle w^j\phi_n,z^j\psi_n\rangle
\right|^2.
\]
This is the form used in the homogeneous calculations of
\cite{AzariLuYang2017,LiuLuZu2026square,
LiuLuZu2026block,LiuLuZu2026power}.

For $z^k-w^\ell$ with $k\neq\ell$, such a reduction cannot be assumed
a priori; the weighted grading introduced below will instead determine
the exact pairs $(n,m)$ which contribute to the double sum.

The core operator associated with the commuting pair
$(R_z,R_w)$ is defined by
\[
C_M
=
I_M-R_zR_z^*-R_wR_w^*
+R_zR_wR_z^*R_w^*.
\]
It is a bounded self-adjoint operator on $M$; see
\cite{YangHS2005}.

Following Yang \cite{YangHS2005}, a submodule
$M\subset H^2(\mathbb D^2)$ is called a Hilbert--Schmidt submodule
if its core operator $C_M$ is Hilbert--Schmidt.  By
\cite[Theorem~4.2]{YangHS2005}, this is equivalent to
$\Sigma_0(M)<\infty$
and
$\Sigma_1(M)<\infty$.

\begin{remark}
For a Hilbert--Schmidt submodule, Yang's identity
$\Sigma_0(M)-\Sigma_1(M)=1$ follows from \cite[Theorem~4.2(i)]{YangHS2005}.
We shall not use this identity to determine the low-order invariants
of $M_{k,\ell}$; instead, $\Sigma_0$ and $\Sigma_1$ will both be
recovered from the explicit quasi-homogeneous calculations in
Section~3.  Yang's identity will therefore serve only as a
consistency check.
\end{remark}

\subsection{The weighted decomposition associated with \texorpdfstring{\([z^k-w^{\ell}]\)}{[zk-w{ell}]}}
The weighted circle action associated with \(z^k-w^\ell\) has already played an important role in the study of quasi-homogeneous quotient modules; see \cite{GuoWang2012,GuoWangZhang2012}. We record here the corresponding Fourier decomposition in a form adapted to the principal submodule \(M_{k,\ell}\).

Fix $k,\ell\in\mathbb N$. For $\theta\in\mathbb R$, define
\[
(U_\theta f)(z,w)
=
f(e^{i\ell\theta}z,e^{ik\theta}w).
\]
Since $U_\theta$ only multiplies each monomial by a unimodular scalar,
$\{U_\theta\}_{\theta\in\mathbb R}$ is a strongly continuous unitary
group on $H^2(\mathbb D^2)$.  More precisely, $U_\theta(z^aw^b)=
e^{i(\ell a+kb)\theta}z^aw^b$.

For $q\in\mathbb N_0$, define the weighted homogeneous space
\[
\mathcal H_q^{(k,\ell)}
=
\operatorname{span}
\left\{
z^aw^b:
a,b\in\mathbb N_0,\ 
\ell a+kb=q
\right\},\]
where $\deg_{k,\ell}(z^a w^b):
=\ell a +kb$.

When the pair $(k,\ell)$ is fixed, we simply write $\mathcal H_q$.
We also use the convention $\mathcal H_q=\{0\}, q<0$.

Since the monomials with different bidegree are mutually orthogonal, we have $H^2(\mathbb D^2)
=\bigoplus_{q\geq0}\mathcal H_q,$
where $\bigoplus$ denotes the Hilbert-space orthogonal direct
sum.  Notice that some of the spaces $\mathcal H_q$ may be zero,
especially when $\gcd(k,\ell)>1$.

The orthogonal projection onto $\mathcal H_q$ may equivalently be
written as the Fourier projection
\[
Q_qf
=
\frac1{2\pi}
\int_0^{2\pi}
e^{-iq\theta}U_\theta f\,d\theta,
\qquad f \in H^2(\mathbb D^2), \,\,q \geq 0.\]

Here $Q_q$ is the $q$-th Fourier projection associated with the unitary
representation $\{U_\theta\}_{\theta\in\mathbb R}$. In particular, for a
monomial $z^aw^b$,
\[
Q_q(z^aw^b)
=
\begin{cases}
z^aw^b, & \ell a+kb=q,\\
0, & \ell a+kb\ne q.
\end{cases}
\]
Hence $\operatorname{ran}Q_q=\mathcal H_q$. For convenience, we also set $Q_q=0, q<0$.

Let $p_{k,\ell}(z,w)=
z^k-w^\ell$, the identity
\begin{equation}
\label{eq:Fourier-shift}
Q_s(p_{k,\ell}f)
=
p_{k,\ell}Q_{s-k\ell}f,
\qquad s\in\mathbb Z,
\end{equation}
remains valid for all $s\in\mathbb Z$.
This representation will be useful for passing the weighted
decomposition to invariant subspaces.

The two monomials in $p_{k,\ell}$ have the same weighted degree:
\[
\deg_{k,\ell}(z^k)
=
\ell k,
\qquad
\deg_{k,\ell}(w^\ell)
=
k\ell.
\]
Equivalently,
\begin{equation}
\label{eq:p-weighted-homogeneous}
U_\theta p_{k,\ell}
=
e^{ik\ell\theta}p_{k,\ell}.
\end{equation}
Thus
$p_{k,\ell}\mathcal H_q
\subset
\mathcal H_{q+k\ell}$.

Let $M_{k,\ell}=[p_{k,\ell}]=[z^k-w^\ell]$. For every polynomial $g\in\mathbb C[z,w]$, equation~\eqref{eq:p-weighted-homogeneous} gives
\[
U_\theta(gp_{k,\ell})
=
(U_\theta g)(U_\theta p_{k,\ell})
=
e^{ik\ell\theta}(U_\theta g)p_{k,\ell}.
\]
Since $U_\theta g$ is again a polynomial, it follows that
$U_\theta
\bigl(
\mathbb C[z,w]p_{k,\ell}
\bigr)
\subset
\mathbb C[z,w]p_{k,\ell}$.
By continuity of $U_\theta$ and the definition of the principal
submodule,
$U_\theta M_{k,\ell}\subset M_{k,\ell}$.
Applying the same argument to $U_{-\theta}$ yields the reverse
inclusion, and hence
$U_\theta M_{k,\ell}=M_{k,\ell}$.
Since $M_{k,\ell}$ is closed and invariant under every $U_\theta$,
the Fourier projection
\[
Q_qf
=
\frac1{2\pi}\int_0^{2\pi}e^{-iq\theta}U_\theta f\,d\theta
\]
also belongs to $M_{k,\ell}$ whenever $f\in M_{k,\ell}$.  Thus every
$Q_q$ leaves $M_{k,\ell}$ invariant.

The following decomposition will be used repeatedly in Section~\ref{sec3}.

\begin{lemma}
\label{lem:weighted-module-decomposition}
For every $s\in\mathbb Z$,
\[
M_{k,\ell}\cap\mathcal H_s
=
\begin{cases}
p_{k,\ell}\mathcal H_{s-k\ell},
& s\geq k\ell,
\\[1mm]
\{0\},
& s<k\ell.
\end{cases}
\]
Consequently,
\begin{equation}
\label{eq:weighted-module-direct-sum}
M_{k,\ell}
=
\bigoplus_{q\geq0}
p_{k,\ell}\mathcal H_q.
\end{equation}
\end{lemma}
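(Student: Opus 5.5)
The plan is to prove both inclusions in the identity for $M_{k,\ell}\cap\mathcal H_s$ by applying the Fourier projections $Q_s$ to the defining dense set $\mathbb C[z,w]p_{k,\ell}$, and then to assemble the direct sum \eqref{eq:weighted-module-direct-sum} from the orthogonal decomposition $H^2(\mathbb D^2)=\bigoplus_{q}\mathcal H_q$.

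\emph{Easy inclusion.} If $s\geq k\ell$, every element of $\mathcal H_{s-k\ell}$ is a polynomial, because $\mathcal H_q$ is spanned by finitely many monomials. Hence $p_{k,\ell}\mathcal H_{s-k\ell}\subset \mathbb C[z,w]p_{k,\ell}\subset M_{k,\ell}$. Since $p_{k,\ell}\mathcal H_q\subset\mathcal H_{q+k\ell}$, we get $p_{k,\ell}\mathcal H_{s-k\ell}\subset M_{k,\ell}\cap\mathcal H_s$.

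\emph{Reverse inclusion.} Let $f\in M_{k,\ell}\cap\mathcal H_s$ and choose polynomials $g_n$ with $g_np_{k,\ell}\to f$ in $H^2(\mathbb D^2)$. Since $Q_s$ is an orthogonal projection and $Q_sf=f$, \eqref{eq:Fourier-shift} gives
\[
p_{k,\ell}Q_{s-k\ell}g_n=Q_s(g_np_{k,\ell})\longrightarrow f .
\]
If $s<k\ell$, then $Q_{s-k\ell}=0$, so $f=0$. If $s\geq k\ell$, the elements $p_{k,\ell}Q_{s-k\ell}g_n$ lie in $p_{k,\ell}\mathcal H_{s-k\ell}$. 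This set is the image of the finite-dimensional space $\mathcal H_{s-k\ell}$ under the linear map $h\mapsto p_{k,\ell}h$, so it is a finite-dimensional, hence closed, subspace. Therefore $f\in p_{k,\ell}\mathcal H_{s-k\ell}$.

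The only point needing care is the closedness of $p_{k,\ell}\mathcal H_{s-k\ell}$. Finite-dimensionality settles it, so one never needs the boundedness below of multiplication by $p_{k,\ell}$ on $H^2(\mathbb D^2)$, which would be a more delicate matter. Injectivity of $h\mapsto p_{k,\ell}h$ is not needed either, although it holds trivially for polynomials.

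\emph{The direct sum.} Every $Q_q$ leaves $M_{k,\ell}$ invariant, as recorded just before the lemma. Thus for $f\in M_{k,\ell}$ we have $Q_sf\in M_{k,\ell}\cap\mathcal H_s$, and $f=\sum_s Q_sf$ converges in norm. Consequently
\[
M_{k,\ell}=\bigoplus_{s\geq0}\bigl(M_{k,\ell}\cap\mathcal H_s\bigr)=\bigoplus_{s\geq k\ell}p_{k,\ell}\mathcal H_{s-k\ell}=\bigoplus_{q\geq0}p_{k,\ell}\mathcal H_q,
\]
where the first sum is orthogonal because the spaces $\mathcal H_s$ are mutually orthogonal, and the last step reindexes by $q=s-k\ell$.
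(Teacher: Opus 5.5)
Your proposal is correct and follows essentially the same route as the paper. Both arguments get the easy inclusion from polynomiality and weighted homogeneity, and the reverse inclusion by applying $Q_s$ with the Fourier-shift identity and using that the finite-dimensional space $p_{k,\ell}\mathcal H_{s-k\ell}$ is closed. Both then get the direct sum from the $Q_q$-invariance of $M_{k,\ell}$; the paper additionally states the inclusion $\bigoplus_{q\geq0}p_{k,\ell}\mathcal H_q\subset M_{k,\ell}$ (by closedness of $M_{k,\ell}$), which you leave implicit but which is immediate.
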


\begin{proof}
Recall that
\[
M_{k,\ell}
=
\overline{\mathbb C[z,w]\,p_{k,\ell}}^{\,H^2(\mathbb D^2)}
\]
and that $p_{k,\ell}=z^k-w^\ell$ is weighted homogeneous of
weighted degree $k\ell$. Hence
$p_{k,\ell}\mathcal H_r
\subset
\mathcal H_{r+k\ell}, r\geq0$.

We first consider the case $s\geq k\ell$.
Let $h\in\mathcal H_{s-k\ell}$. Since
$\mathcal H_{s-k\ell}\subset\mathbb C[z,w]$, we have
$p_{k,\ell}h
\in
\mathbb C[z,w]\,p_{k,\ell}
\subset
M_{k,\ell}$.
Moreover, by weighted homogeneity,
$p_{k,\ell}h\in\mathcal H_s$.
Therefore
$p_{k,\ell}\mathcal H_{s-k\ell}
\subset
M_{k,\ell}\cap\mathcal H_s$.

Conversely, let $f\in M_{k,\ell}\cap \mathcal H_s$.  Choose polynomials
$\{g_n\}\subset\mathbb C[z,w]$ such that
$p_{k,\ell}g_n\longrightarrow f
\,\,\,\text{in }H^2(\mathbb D^2)$.
Applying $Q_s$ and using \eqref{eq:Fourier-shift} gives
$p_{k,\ell}Q_{s-k\ell}g_n
=
Q_s(p_{k,\ell}g_n)
\longrightarrow f$.

If $s\geq k\ell$, then
$Q_{s-k\ell}g_n\in \mathcal H_{s-k\ell}$, and
$p_{k,\ell}\mathcal H_{s-k\ell}$ is finite-dimensional and hence closed.
Therefore
$f\in p_{k,\ell}\mathcal H_{s-k\ell}$.

If $s<k\ell$, then $Q_{s-k\ell}=0$, so the same convergence gives
$f=0$.  Hence
\[
M_{k,\ell}\cap \mathcal H_s
=
\begin{cases}
p_{k,\ell}\mathcal H_{s-k\ell},&s\geq k\ell,\\
\{0\},&s<k\ell.
\end{cases}
\]

It remains to prove the direct-sum decomposition.
Since each Fourier projection $Q_s$ leaves $M_{k,\ell}$ invariant,
every $f\in M_{k,\ell}$ has the orthogonal expansion
\[
f=\sum_{s\geq0}Q_sf,
\qquad
Q_sf\in M_{k,\ell}\cap \mathcal H_s.
\]
By the formula proved above, $Q_sf=0  (s<k\ell)$, whereas, for $s\geq k\ell$,
$Q_sf\in p_{k,\ell} \mathcal H_{s-k\ell}$.

Hence
\[
f\in
\bigoplus_{s\geq k\ell}
p_{k,\ell} \mathcal H_{s-k\ell}
=
\bigoplus_{q\geq0}
p_{k,\ell} \mathcal H_q,
\]
after the change of index $q=s-k\ell$.  Therefore
$M_{k,\ell}
\subset
\bigoplus_{q\geq0}
p_{k,\ell} \mathcal H_q$.

Conversely, for every $q\geq0$,
$p_{k,\ell} \mathcal H_q
\subset
\mathbb C[z,w]\,p_{k,\ell}
\subset
M_{k,\ell}$.
Since $M_{k,\ell}$ is closed, it follows that
$\bigoplus_{q\geq0}
p_{k,\ell} \mathcal H_q
\subset
M_{k,\ell}$.

Finally, if $q_1\neq q_2$, then
$p_{k,\ell} \mathcal H_{q_j}
\subset
\mathcal H_{q_j+k\ell},
 j=1,2$,
and the two weighted degrees are distinct.  Thus the corresponding
summands are orthogonal.  Consequently,
\[
M_{k,\ell}
=\bigoplus_{q\geq0}
p_{k,\ell} \mathcal H_q.
\]




\end{proof}

Multiplication by $z$ and $w$ shifts the weighted degree by $\ell$
and $k$, respectively.  Hence the orthogonality in \eqref{eq:weighted-module-direct-sum}
is preserved under multiplication by either coordinate function, and
\begin{equation}
\label{eq:zM-weighted}
zM_{k,\ell}
=\bigoplus_{q\geq0}p_{k,\ell}z\mathcal H_q,
\qquad
wM_{k,\ell}
=\bigoplus_{q\geq0}p_{k,\ell}w\mathcal H_q.
\end{equation}
Reindexing the summands and taking orthogonal complements
componentwise gives
\begin{equation}
\label{eq:Ez-weighted}
M_{k,\ell}\ominus zM_{k,\ell}
=
\bigoplus_{q\geq0}
\bigl(p_{k,\ell} \mathcal H_q\ominus p_{k,\ell}z \mathcal H_{q-\ell}\bigr),
\end{equation}
and similarly
\begin{equation}
\label{eq:Ew-weighted}
M_{k,\ell}\ominus wM_{k,\ell}
=\bigoplus_{q\geq0}
\bigl(p_{k,\ell} \mathcal H_q\ominus p_{k,\ell}w \mathcal H_{q-k}\bigr).
\end{equation}

The weighted decomposition
\eqref{eq:weighted-module-direct-sum} is the quasi-homogeneous analogue
of the ordinary homogeneous decomposition used for homogeneous
principal submodules.  The essential difference is that multiplication
by $z$ changes weighted degree by $\ell$, whereas multiplication by
$w$ changes weighted degree by $k$.  It is precisely this asymmetry
that produces two different ceiling parameters
\[\left\lceil\frac{j}{\ell}\right\rceil, \qquad
\left\lceil\frac{j}{k}\right\rceil\]
in the complete numerical invariant formula derived in Section~\ref{sec3}.


\section{Numerical invariants for the submodules \texorpdfstring{\([z^k-w^{\ell}]\)}{[zk-w{ell}]}}
\label{sec3}

Throughout this section, let
\[
M_{k,\ell}
=
[p_{k,\ell}]
=
[z^k-w^\ell]
\subset H^2(\mathbb D^2),
\qquad
k,\ell\in\mathbb N,\quad k\neq\ell.
\]
For brevity, we write $p:=p_{k,\ell}=z^k-w^\ell$ throughout this section whenever no ambiguity can arise.

By the decompositions \eqref{eq:Ez-weighted} and \eqref{eq:Ew-weighted}, the computation of the higher
numerical invariants begins with the nonzero weighted components of
the two defect spaces.  We first construct explicit orthonormal bases
for these components and then use their weighted supports to determine
exactly which pairs contribute to $\langle w^j\varphi_n,z^j\psi_m\rangle,
j\geq1$.

\subsection{Explicit orthonormal bases for the defect spaces}

We begin with a simple dimension observation.

\begin{lemma}
\label{lem:defect-dimension}
For every $q\geq0$,
\[
\dim
\left(
p\mathcal H_q
\ominus
pz\mathcal H_{q-\ell}
\right)
=
\begin{cases}
1,& k\mid q,\\
0,& k\nmid q,
\end{cases}
\]
and
\[
\dim
\left(
p\mathcal H_q
\ominus
pw\mathcal H_{q-k}
\right)
=
\begin{cases}
1,& \ell\mid q,\\
0,& \ell\nmid q.
\end{cases}
\]
Consequently,
\[
M_{k,\ell}\ominus zM_{k,\ell}
=\bigoplus_{n\geq0}
\left(
p\mathcal H_{kn}
\ominus
pz\mathcal H_{kn-\ell}
\right),
\]
and
\[
M_{k,\ell}\ominus wM_{k,\ell}
=\bigoplus_{m\geq0}
\left(
p\mathcal H_{\ell m}
\ominus
pw\mathcal H_{\ell m-k}
\right),
\]
where every summand displayed above is one-dimensional.
\end{lemma}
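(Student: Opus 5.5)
Since multiplication by the nonzero polynomial $p$ is injective on polynomials, we have
\[
\dim\bigl(p\mathcal H_q\ominus pz\mathcal H_{q-\ell}\bigr)=\dim\mathcal H_q-\dim\mathcal H_{q-\ell},
\]
because $pz\mathcal H_{q-\ell}\subset p\mathcal H_q$ (indeed $z\mathcal H_{q-\ell}\subset\mathcal H_q$) and both spaces are finite-dimensional. So the whole problem reduces to counting monomials.

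Write $N(q)=\#\{(a,b)\in\mathbb N_0^2:\ell a+kb=q\}=\dim\mathcal H_q$. Multiplication by $z$ is the injection $(a,b)\mapsto(a+1,b)$ from the monomials of $\mathcal H_{q-\ell}$ into those of $\mathcal H_q$. Its image consists exactly of the monomials of $\mathcal H_q$ with $a\ge1$. Hence
\[
N(q)-N(q-\ell)=\#\{b\in\mathbb N_0: kb=q\},
\]
which is the number of monomials of $\mathcal H_q$ with $a=0$. This number is $1$ if $k\mid q$ (namely $b=q/k$) and $0$ otherwise. The case $q<\ell$ is consistent with the convention $\mathcal H_{q-\ell}=\{0\}$, since then every monomial of $\mathcal H_q$ has $a=0$.

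The $w$-statement follows symmetrically. Multiplication by $w$ sends $(a,b)\mapsto(a,b+1)$, and the cokernel counts the monomials with $b=0$, i.e.\ solutions of $\ell a=q$. This gives dimension $1$ exactly when $\ell\mid q$.

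Finally, substitute into \eqref{eq:Ez-weighted} and \eqref{eq:Ew-weighted}. The summands with $k\nmid q$ (respectively $\ell\nmid q$) vanish. Reindexing the surviving ones by $q=kn$ (respectively $q=\ell m$) with $n,m\ge0$ gives the displayed decompositions, each summand being one-dimensional.

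The only point requiring care, and the step I regard as the main obstacle, is the containment $pz\mathcal H_{q-\ell}\subset p\mathcal H_q$ together with the injectivity of multiplication by $p$. The latter holds since $\mathbb C[z,w]$ is an integral domain. With these in hand, the orthogonal complement has dimension equal to the difference of dimensions, and the rest is the elementary lattice-point count above.
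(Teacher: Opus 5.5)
Your proof is correct and follows essentially the same route as the paper. Injectivity of multiplication by $p$ reduces the problem to counting the monomials of $\mathcal H_q$ outside $z\mathcal H_{q-\ell}$ (resp.\ $w\mathcal H_{q-k}$), which are exactly $w^{q/k}$ when $k\mid q$ (resp.\ $z^{q/\ell}$ when $\ell\mid q$), and the decompositions then follow from \eqref{eq:Ez-weighted} and \eqref{eq:Ew-weighted} by discarding the zero summands.
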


\begin{proof}
Since multiplication by the nonzero polynomial $p$ is injective,
$\dim p\mathcal H_q=\dim\mathcal H_q$
and
$\dim pz\mathcal H_{q-\ell}=
\dim z\mathcal H_{q-\ell}$. Every monomial $z^a w^b\in\mathcal H_q$ with $a\geq1$ belongs to
$z\mathcal H_{q-\ell}$. Thus the only possible monomial in
$\mathcal H_q$ which does not belong to $z\mathcal H_{q-\ell}$ is a
monomial with $a=0$, namely $w^n$. Such a monomial exists exactly when $q=kn$
for some $n\geq0$. Therefore
\[
\dim\mathcal H_q-\dim z\mathcal H_{q-\ell}
=
\begin{cases}
1,&k\mid q,\\
0,&k\nmid q.
\end{cases}
\]
Since $pz\mathcal H_{q-\ell}$ is a subspace of the
finite-dimensional space $p\mathcal H_q$, the first dimension formula
follows.  Combining it with
\eqref{eq:Ez-weighted}, we obtain
\[
M_{k,\ell}\ominus zM_{k,\ell}
=\bigoplus_{n\geq0}
\left(
p\mathcal H_{kn}
\ominus
pz\mathcal H_{kn-\ell}
\right),
\]
and every summand on the right-hand side is one-dimensional.

The second dimension formula and the corresponding decomposition of
$M_{k,\ell}\ominus wM_{k,\ell}$ follow in the same way after
interchanging $z,k$ with $w,\ell$.
\end{proof}

For $n\geq0$, set
\[
N_n
=
\left\lfloor\frac{n}{\ell}\right\rfloor+1.
\]
Consider the vectors
\[
v_r^{(n)}
=
p z^{kr}w^{n-\ell r},
\qquad
0\leq r\leq N_n-1.
\]
They all belong to the same quasi-homogeneous component
$p\mathcal H_{kn}$.

\begin{lemma}
\label{lem:tridiagonal-chain}
For $0\leq r,s\leq N_n-1$,
\[
\langle v_r^{(n)},v_s^{(n)}\rangle
=
\begin{cases}
2,&r=s,\\
-1,&|r-s|=1,\\
0,&|r-s|\geq2.
\end{cases}
\]
Hence the Gram matrix of
$v_0^{(n)},\ldots,v_{N_n-1}^{(n)}$ is
\[
T_{N_n}
=
\begin{pmatrix}
2&-1&0&\cdots&0\\
-1&2&-1&\ddots&\vdots\\
0&-1&2&\ddots&0\\
\vdots&\ddots&\ddots&\ddots&-1\\
0&\cdots&0&-1&2
\end{pmatrix}.
\]
\end{lemma}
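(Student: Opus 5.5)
The plan is to expand each $v_r^{(n)}$ in the standard orthonormal monomial basis and read off the inner products by matching monomials. Since $p=z^k-w^\ell$,
\[
v_r^{(n)}=z^{k(r+1)}w^{n-\ell r}-z^{kr}w^{n-\ell(r-1)}
= e_{r+1}-e_r,
\qquad e_t:=z^{kt}w^{n-\ell(t-1)} .
\]
The exponents are admissible because $0\le r\le N_n-1=\lfloor n/\ell\rfloor$ gives $n-\ell r\ge 0$. So each $v_r^{(n)}$ is a difference of two distinct monomials, each of norm $1$.

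The key step is to observe that the monomials $e_t$, for $t=0,1,\ldots,N_n$, are pairwise distinct, since their $z$-exponents $kt$ are distinct because $k\ge1$. Hence $\{e_t\}$ is an orthonormal family in $H^2(\mathbb D^2)$. Then
\[
\langle v_r^{(n)},v_s^{(n)}\rangle=\langle e_{r+1}-e_r,\,e_{s+1}-e_s\rangle
=\delta_{r+1,s+1}-\delta_{r+1,s}-\delta_{r,s+1}+\delta_{r,s}.
\]
This equals $2$ when $r=s$, $-1$ when $|r-s|=1$, and $0$ when $|r-s|\ge2$.

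Recording these values as a matrix gives exactly the tridiagonal matrix $T_{N_n}$. There is no real obstacle here. The only point requiring care is the bookkeeping of the index range, which ensures every $w$-exponent is nonnegative so that all vectors genuinely lie in $p\mathcal H_{kn}$; this range is precisely what defines $N_n$.
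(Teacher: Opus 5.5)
Your proposal is correct and follows essentially the same route as the paper: the paper also writes $v_r^{(n)}=\xi_{r+1}-\xi_r$ with $\xi_r=z^{kr}w^{n+\ell-\ell r}$ (your $e_r$), notes that $\xi_0,\ldots,\xi_{N_n}$ are distinct monomials and hence orthonormal, and expands $\langle\xi_{r+1}-\xi_r,\xi_{s+1}-\xi_s\rangle=2\delta_{r,s}-\delta_{r+1,s}-\delta_{r,s+1}$. Your check that all $w$-exponents remain nonnegative up to index $N_n$ is a detail the paper leaves implicit.
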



\begin{proof}
For $0\leq r\leq N_n$, set
$\xi_r=
z^{kr}w^{n+\ell-\ell r}$.
Then $\{\xi_r:0\leq r\leq N_n\}$
consists of distinct monomials and hence forms an orthonormal set.
Moreover, 
\[v_r^{(n)}=
(z^k-w^\ell)z^{kr}w^{n-\ell r}=
\xi_{r+1}-\xi_r.\]
Therefore
\[
\begin{aligned}
\langle v_r^{(n)},v_s^{(n)}\rangle=
\langle\xi_{r+1}-\xi_r,\xi_{s+1}-\xi_s\rangle=
2\delta_{r,s}
-\delta_{r+1,s}
-\delta_{r,s+1}.
\end{aligned}
\]
Hence the asserted inner-product relations follow immediately, and the
corresponding Gram matrix is the tridiagonal matrix $T_{N_n}$.
\end{proof}

Before using the tridiagonal Gram matrix to construct the defect
vectors, we verify that the chain considered above is the only one
that contributes to the corresponding defect component. This point is
automatic when $\mathcal H_{kn}$ consists of a single chain, but requires
some care when $\gcd(k,\ell)>1$, since in that case a fixed weighted
homogeneous space may contain several residue chains.

For a statement $E$, let $\mathbf 1_E$ denote its indicator, equal to
$1$ if $E$ holds and to $0$ otherwise. For arbitrary nonnegative
integers $a,b,c,d$, a direct expansion gives
\[
\langle pz^aw^b,pz^cw^d\rangle
=2\mathbf 1_{\{(a,b)=(c,d)\}}-
\mathbf 1_{\{(c,d)=(a+k,b-\ell)\}}-
\mathbf 1_{\{(c,d)=(a-k,b+\ell)\}},\]
where a term involving a negative exponent is understood to be zero.
Consequently,
\begin{equation}
\label{eq:basic-support}
\langle pz^aw^b,pz^cw^d\rangle\neq0
\quad\Longrightarrow\quad
(c-a,d-b)
\in
\{(0,0),(k,-\ell),(-k,\ell)\}.
\end{equation}
In particular, two monomials that interact under the Gram form $(h,g)\longmapsto \langle ph,pg\rangle$
must have $z$-exponents congruent modulo $k$. Hence the monomials in
$\mathcal H_{kn}$ split into mutually orthogonal residue chains modulo
$k$.

Among these chains, the unique one containing the monomial $w^n$ is \[w^n,\;
z^kw^{n-\ell},\;
z^{2k}w^{n-2\ell},\ldots,\] that is, the chain underlying
$v_0^{(n)},\ldots,v_{N_n-1}^{(n)}$. All of its members except $w^n$
are divisible by $z$ and therefore belong to
$z\mathcal H_{kn-\ell}$. Every other residue chain consists entirely
of monomials divisible by $z$, and hence is contained in
$z\mathcal H_{kn-\ell}$.


It follows that only the distinguished chain containing $w^n$
contributes to the one-dimensional defect component
\[
pH_{kn}\ominus pzH_{kn-\ell}.
\]
Lemma~\ref{lem:tridiagonal-chain} therefore reduces the construction of its unit defect
vector to a finite tridiagonal Gram-matrix calculation.

\begin{theorem}
\label{thm:defect-bases}
For $n\geq0$, set $N_n=\left\lfloor\frac{n}{\ell}\right\rfloor+1$
and define
\begin{equation}
\label{eq:phi-basis}
\phi_n
=
\frac{1}{\sqrt{N_n(N_n+1)}}
\sum_{r=0}^{N_n-1}
(N_n-r)\,
p z^{kr}w^{n-\ell r}.
\end{equation}
Then $\{\phi_n:n\geq0\}$
is an orthonormal basis of
$M_{k,\ell}\ominus zM_{k,\ell}$.

Similarly, if
$L_m
=
\left\lfloor\frac{m}{k}\right\rfloor+1$,
and
\begin{equation}
\label{eq:psi-basis}
\psi_m
=
\frac{1}{\sqrt{L_m(L_m+1)}}
\sum_{s=0}^{L_m-1}
(L_m-s)\,
p z^{m-ks}w^{\ell s},
\end{equation}
then $\{\psi_m:m\geq0\}$
is an orthonormal basis of
$M_{k,\ell}\ominus wM_{k,\ell}$.
\end{theorem}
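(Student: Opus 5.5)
Since Lemma~\ref{lem:defect-dimension} shows that each summand $p\mathcal H_{kn}\ominus pz\mathcal H_{kn-\ell}$ is one-dimensional and that these summands exhaust $M_{k,\ell}\ominus zM_{k,\ell}$ orthogonally, it suffices to show three things: $\phi_n$ lies in $p\mathcal H_{kn}$, it is orthogonal to $pz\mathcal H_{kn-\ell}$, and it has norm one. The claim for $\psi_m$ then follows by the symmetric argument (interchange $z,k$ with $w,\ell$). Membership is immediate, because each $v_r^{(n)}=pz^{kr}w^{n-\ell r}$ has weighted degree $k\ell+kn$. Note that $n-\ell r\ge0$ for $r\le N_n-1=\lfloor n/\ell\rfloor$.

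For orthogonality, I would test $\phi_n$ against $pz^aw^b$ for every monomial $z^aw^b\in\mathcal H_{kn-\ell}$. By \eqref{eq:basic-support}, $\langle v_r^{(n)},pz^{a+1}w^b\rangle$ can be nonzero only if $z^{a+1}w^b$ lies in the residue chain $\{z^{kr}w^{n-\ell r}\}$. Monomials outside this chain are therefore orthogonal to $\phi_n$ automatically. For chain members divisible by $z$, namely $z^{ks}w^{n-\ell s}$ with $1\le s\le N_n-1$, Lemma~\ref{lem:tridiagonal-chain} gives
\[
\Bigl\langle \sum_{r}(N_n-r)v_r^{(n)},\,v_s^{(n)}\Bigr\rangle=(T_{N_n}c)_s,\qquad c_r=N_n-r.
\]
The condition to verify is $(T_{N_n}c)_s=0$ for $1\le s\le N_n-1$. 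This holds because $c$ is linear in $r$, so its second differences vanish: $-c_{s-1}+2c_s-c_{s+1}=0$ in the interior rows.

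The last row $s=N_n-1$ needs separate care, since there the missing neighbour $c_{N_n}$ must be interpreted as $0$; this is exactly why $c_{N_n-1}=1$ makes the linear sequence vanish at $r=N_n$. I also need to confirm that the chain stops at $r=N_n-1$, i.e.\ that $z^{kN_n}w^{n-\ell N_n}$ has a negative exponent, so no further monomial of $\mathcal H_{kn}$ in the chain is missed. I expect this endpoint and boundary bookkeeping to be the main place requiring care.

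For the norm, I would compute $c^{T}T_{N_n}c=\sum_s c_s(T_{N_n}c)_s$. Only row $s=0$ contributes, and it gives $c_0(2c_0-c_1)=N_n(N_n+1)$. This yields $\|\phi_n\|=1$ with the stated normalization. As a cross-check, telescoping directly gives $\sum_r c_r(\xi_{r+1}-\xi_r)=-N_n\xi_0+\sum_{r=1}^{N_n}\xi_r$, whose squared norm is $N_n^2+N_n$.

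Orthogonality across different $n$ is automatic because the $\phi_n$ lie in distinct weighted components. Combined with Lemma~\ref{lem:defect-dimension}, this makes $\{\phi_n\}$ an orthonormal basis of $M_{k,\ell}\ominus zM_{k,\ell}$.
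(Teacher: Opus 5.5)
Your proposal is correct and follows essentially the paper's route. Both arguments use the support relation to reduce to the distinguished residue chain, check that $T_{N_n}c$ vanishes in rows $1,\dots,N_n-1$ for the linear vector $c_r=N_n-r$, obtain $c^{T}T_{N_n}c=N_n(N_n+1)$, and combine this with the one-dimensionality from Lemma~\ref{lem:defect-dimension}. The only differences are cosmetic: the paper derives $c$ by solving the second-difference recurrence with its terminal condition, whereas you verify the given $c$ directly and add a telescoping check of the norm.
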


\begin{proof}
We first consider $M_{k,\ell}\ominus zM_{k,\ell}$.
For fixed $n\geq0$, set $v_r^{(n)}=
p z^{kr}w^{n-\ell r},
0\leq r\leq N_n-1$. By the preceding residue-chain decomposition, all residue chains in
$p \mathcal H_{kn}$ other than the distinguished chain containing $w^n$ lie
entirely in $pz \mathcal H_{kn-\ell}$ and are orthogonal to the distinguished
chain.  Moreover, within the distinguished chain,
$v_r^{(n)}\in pz \mathcal H_{kn-\ell},
1\leq r\leq N_n-1$.
Therefore, if $V_n:=
\operatorname{span}
\{v_0^{(n)},\ldots,v_{N_n-1}^{(n)}\}$,
then
\[
p \mathcal H_{kn}\ominus pz \mathcal H_{kn-\ell}
=
V_n\ominus
\operatorname{span}
\{v_1^{(n)},\ldots,v_{N_n-1}^{(n)}\},
\]
where the span on the right is understood to be $\{0\}$ when
$N_n=1$.

Since this defect component is one-dimensional, it suffices to find
a nonzero vector in $V_n$ orthogonal to
$v_1^{(n)},\ldots,v_{N_n-1}^{(n)}$.

Write $N=N_n$ and let
\[
x=\sum_{r=0}^{N-1}c_rv_r^{(n)},
\qquad
c=(c_0,\ldots,c_{N-1})^T.
\]
By Lemma~\ref{lem:tridiagonal-chain}, for $N\geq2$ the orthogonality conditions
\[
\langle x,v_s^{(n)}\rangle=0,
\qquad 1\leq s\leq N-1,
\]
are equivalent to
\[
-c_{s-1}+2c_s-c_{s+1}=0,
\qquad 1\leq s\leq N-2,
\]
together with
\[
-c_{N-2}+2c_{N-1}=0.
\]
The recurrence shows that $c_s$ is affine in $s$, and the terminal
condition then gives
\[
c_s=C(N-s),
\qquad
0\leq s\leq N-1.
\]
Choosing $C=1$, we obtain
\[
c=(N,N-1,\ldots,1)^T.
\]
The formula also covers $N=1$, for which $c=(1)$.  Thus, in all cases,
\[
T_Nc=(N+1)e_0,
\qquad
e_0=(1,0,\ldots,0)^T.
\]



Consequently,
\[
\widetilde\phi_n
:=
\sum_{r=0}^{N-1}(N-r)v_r^{(n)}
\]
is orthogonal to
$v_1^{(n)},\ldots,v_{N-1}^{(n)}$, and hence spans $p\mathcal H_{kn}\ominus pz\mathcal H_{kn-\ell}$.

Moreover,
\[
\|\widetilde\phi_n\|^2
=
c^*T_Nc
=
c^*(N+1)e_0
=
N(N+1).
\]
Thus $\phi_n$ defined in \eqref{eq:phi-basis} is the unit vector
spanning $p \mathcal H_{kn}\ominus pz \mathcal H_{kn-\ell}$.

For $n\neq n'$, the vectors $\phi_n$ and $\phi_{n'}$ belong to the
distinct weighted homogeneous spaces $\mathcal H_{kn+k\ell}$
and $\mathcal H_{kn'+k\ell}$,
respectively, and are therefore orthogonal. Using the orthogonal
decomposition
\[
M_{k,\ell}\ominus zM_{k,\ell}
=\bigoplus_{n\geq0}
\left(
p\mathcal H_{kn}
\ominus
pz\mathcal H_{kn-\ell}
\right),
\]
we conclude that
$\{\phi_n:n\geq0\}$ is an orthonormal basis of
$M_{k,\ell}\ominus zM_{k,\ell}$.

The proof for $M_{k,\ell}\ominus wM_{k,\ell}$ is symmetric.
For fixed $m\geq0$, the distinguished chain in
$p\mathcal H_{\ell m}$ is
\[p z^m,\;
p z^{m-k}w^\ell,\;
\ldots,\;
p z^{m-k(L_m-1)}w^{\ell(L_m-1)},\]
whose Gram matrix is $T_{L_m}$. Applying the same argument with the
coefficient vector
$(L_m,L_m-1,\ldots,1)^{\mathsf T}$
shows that the vector in \eqref{eq:psi-basis} is the unit vector
spanning $p\mathcal H_{\ell m}
\ominus
pw\mathcal H_{\ell m-k}$.
The distinct weighted homogeneous components are mutually orthogonal,
and hence
$\{\psi_m:m\geq0\}$ is an orthonormal basis of
$M_{k,\ell}\ominus wM_{k,\ell}$.
\end{proof}

\begin{remark}
Unlike the general Parseval-frame descriptions for
$M_{\theta,\phi}$-type submodules in \cite{ZuLu2025}, the vectors
in Theorem~\ref{thm:defect-bases} form genuine orthonormal bases supported in individual
weighted homogeneous components.  This localization is what permits
the exact support calculation for the higher shifted defect inner
products.
\end{remark}

\subsection{Defect inner products and their exact support}

Since $M_{k,\ell}$ is not homogeneous in the ordinary sense when
$k\neq\ell$, we use the general double-sum form of Yang's numerical
invariants:
\begin{equation}
\label{eq:general-Sigma}
\Sigma_j(M_{k,\ell})
=
\sum_{n=0}^{\infty}
\sum_{m=0}^{\infty}
\left|
\langle w^j\phi_n,z^j\psi_m\rangle
\right|^2,
\qquad j\geq0.
\end{equation}
The quasi-homogeneous structure will reduce this double sum to a single
series.

Although $\Sigma_0(M_{k,\ell})$ is already known from the
$M_{\theta,\phi}$-type theory, a direct computation is useful here
because it uses the same chain structure as the higher-order case.

\begin{lemma}
\label{lem:sigma0-support}
For $n,m\geq0$, $\langle\phi_n,\psi_m\rangle\neq0$
only if there exists $h\geq0$ such that $n=\ell h, m=kh$.
For such $h$, $\langle\phi_{\ell h},\psi_{kh}\rangle=
\frac1{h+1}$.
\end{lemma}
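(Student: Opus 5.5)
The plan is to reduce the inner product to weighted-degree matching and then to an overlap of monomial supports. By Theorem~\ref{thm:defect-bases}, $\phi_n\in p\mathcal H_{kn}\subset\mathcal H_{kn+k\ell}$ and $\psi_m\in p\mathcal H_{\ell m}\subset\mathcal H_{\ell m+k\ell}$. Distinct weighted homogeneous components are orthogonal, so $\langle\phi_n,\psi_m\rangle\neq0$ forces $kn=\ell m$. This alone does not give $n=\ell h$, $m=kh$ when $\gcd(k,\ell)>1$, and a finer argument is needed there.

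For the finer argument, expand both vectors in monomials. The vector $\phi_n$ is a combination of $p\,z^{kr}w^{n-\ell r}$, so it is supported on monomials $z^{kr}w^{n+\ell-\ell r}$ with $0\le r\le N_n$. The $z$-exponents of these monomials are multiples of $k$. Likewise $\psi_m$ is supported on monomials $z^{m+k-ks}w^{\ell s}$, whose $w$-exponents are multiples of $\ell$. A common monomial $z^aw^b$ therefore needs $a=kr$ with $b=n+\ell-\ell r$, and also $a=m+k-ks$ with $b=\ell s$. From $b\equiv 0 \pmod \ell$ we get $\ell\mid n$, say $n=\ell h$. From $a\equiv0\pmod k$ we get $k\mid m$, say $m=kh'$. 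The degree condition $kn=\ell m$ then gives $h=h'$. If no common monomial exists, the inner product vanishes, which proves the support claim.

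For the value with $n=\ell h$, $m=kh$, we have $N_n=L_m=h+1=:N$. The chain for $\phi_n$ runs through the monomials $\xi_r=z^{kr}w^{\ell(h+1-r)}$ for $0\le r\le N$. The chain for $\psi_m$ uses $z^{k(h+1-s)}w^{\ell s}$, which is the same family of monomials with $s=N-r$. So both vectors live on the same chain. Telescoping the coefficients gives
\[\widetilde\phi_n=\sum_{r=0}^{N-1}(N-r)(\xi_{r+1}-\xi_r)=-N\xi_0+\sum_{r=1}^{N}\xi_r .\]
Applying the same computation to $\psi_m$, with the order of the chain reversed and the sign of $p$ accounted for, gives
\[\widetilde\psi_m=\sum_{s=0}^{N-1}(N-s)\,p\,z^{k(h-s)}w^{\ell s}=N\xi_N-\sum_{r=0}^{N-1}\xi_r .\]
Taking the inner product,
\[\langle\widetilde\phi_n,\widetilde\psi_m\rangle=N+(N-1)\cdot(-1)+N\cdot1=N+1 .\]
The contributions are: the coefficient product $N$ at $\xi_0$, the products $-1$ at $\xi_1,\dots,\xi_{N-1}$, and the product $N$ at $\xi_N$. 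I will recheck these term by term.

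Dividing by the norms $N(N+1)$ gives $\langle\phi_{\ell h},\psi_{kh}\rangle=\frac{1}{N}=\frac1{h+1}$. The main obstacle is the bookkeeping: keeping track of the signs from $p=z^k-w^\ell$ in $\psi_m$ and the reversed chain orientation. A quick check at $h=0$ gives $\langle p,p\rangle/2=1$, which matches the formula.
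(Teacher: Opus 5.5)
Your proof is correct and follows essentially the paper's argument. For the support claim, you match monomials in the expansions of $\phi_n$ and $\psi_m$. The paper does the same through the support relation for $\langle pz^aw^b,pz^cw^d\rangle$, which gives $n=\ell(r+s+\varepsilon)$ and $m=k(r+s+\varepsilon)$ at once, so it needs no separate weighted-degree step. For the value, the only difference is bookkeeping: you telescope into the orthonormal monomials $\xi_r$ to get $\langle\widetilde\phi_n,\widetilde\psi_m\rangle=N+1$, while the paper computes $c^{\mathsf T}T_{h+1}d=h+2$ with the tridiagonal Gram matrix. These are the same computation in different coordinates.
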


\begin{proof}
A typical term in $\phi_n$ has the form $pz^{kr}w^{n-\ell r}$,
whereas a typical term in $\psi_m$ has the form $pz^{m-ks}w^{\ell s}$.
By \eqref{eq:basic-support}, if their inner product is nonzero, there is
an $\varepsilon\in\{-1,0,1\}$ such that $m-ks-kr=\varepsilon k$,
and
$\ell s-(n-\ell r)=-\varepsilon\ell$.
Hence
\[
m=k(r+s+\varepsilon),
\qquad
n=\ell(r+s+\varepsilon).
\]

Putting $h=r+s+\varepsilon$
gives $n=\ell h$ and $m=kh$.
Since $n,m\geq0$, necessarily $h\geq0$.

Conversely, suppose $n=\ell h$ and
$m=kh$. To express $\phi_{\ell h}$ and $\psi_{kh}$ with respect to the same chain, set
\[
u_t
=
p z^{kt}w^{\ell(h-t)},
\qquad
0\leq t\leq h.
\]
Since
$N_{\ell h}=L_{kh}=h+1$,
Theorem~\ref{thm:defect-bases} gives
\[
\phi_{\ell h}
=
\frac{1}{\sqrt{(h+1)(h+2)}}
\sum_{t=0}^{h}
(h+1-t)u_t.
\]

On the other hand,
\[
\psi_{kh}
=
\frac{1}{\sqrt{(h+1)(h+2)}}
\sum_{s=0}^{h}
(h+1-s)\,
p z^{k(h-s)}w^{\ell s}.
\]
Making the change of index
$t=h-s$, we have
\[
\psi_{kh}
=
\frac{1}{\sqrt{(h+1)(h+2)}}
\sum_{t=0}^{h}
(t+1)u_t.
\]

Thus, with respect to the common chain
$u_0,\ldots,u_h$, the coefficient vectors of
$\phi_{\ell h}$ and $\psi_{kh}$ are respectively
\[
c
=
(h+1,h,\ldots,2,1)^{\mathsf T},
\qquad
d
=
(1,2,\ldots,h,h+1)^{\mathsf T}.
\]
By Lemma~\ref{lem:tridiagonal-chain}, the Gram matrix of
$u_0,\ldots,u_h$ is $T_{h+1}$.

Then we have $T_{h+1}d=(0,\ldots,0,h+2)^T$.
Therefore $c^TT_{h+1}d=h+2$,
and hence
\[
\langle\phi_{\ell h},\psi_{kh}\rangle
=
\frac{h+2}{(h+1)(h+2)}
=
\frac1{h+1}.
\]
\end{proof}

\begin{theorem}
\label{thm:sigma0}
For $M_{k,\ell}=[z^k-w^\ell]$, we have
$\Sigma_0(M_{k,\ell})=\frac{\pi^2}{6}$.
\end{theorem}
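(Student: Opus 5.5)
We prove Theorem~\ref{thm:sigma0} by plugging the orthonormal bases of Theorem~\ref{thm:defect-bases} into the double-sum formula \eqref{eq:general-Sigma} with $j=0$:
\[
\Sigma_0(M_{k,\ell})=\sum_{n\geq0}\sum_{m\geq0}|\langle\phi_n,\psi_m\rangle|^2 .
\]
As noted in Section~\ref{sec2}, this quantity does not depend on the choice of orthonormal bases. So it is legitimate to use the explicit bases $\{\phi_n\}$ and $\{\psi_m\}$; no separate finiteness argument is needed, because finiteness will come out of the computation itself.

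The first step is to collapse the double sum using the support statement in Lemma~\ref{lem:sigma0-support}. That lemma says $\langle\phi_n,\psi_m\rangle=0$ unless $(n,m)=(\ell h,kh)$ for some $h\geq0$. The map $h\mapsto(\ell h,kh)$ is injective, so each such pair is counted exactly once. This is true even when $\gcd(k,\ell)>1$: the pair $(n,m)$ determines $h=n/\ell$. Hence
\[
\Sigma_0(M_{k,\ell})=\sum_{h\geq0}|\langle\phi_{\ell h},\psi_{kh}\rangle|^2 .
\]

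The second step is to use the value computed in Lemma~\ref{lem:sigma0-support}, namely $\langle\phi_{\ell h},\psi_{kh}\rangle=1/(h+1)$. This gives
\[
\Sigma_0(M_{k,\ell})=\sum_{h\geq0}\frac1{(h+1)^2}=\frac{\pi^2}{6}
\]
by Euler's formula.

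Nearly all the work was done in Lemma~\ref{lem:sigma0-support}, so the only point needing care is the bookkeeping of the support. Specifically, we must check that every index $n\geq0$ in the basis of $\mathcal E_z$ and every $m\geq0$ in the basis of $\mathcal E_w$ appears, and that no nonzero pair is missed or counted twice. This follows from the lemma's "only if" direction combined with the injectivity of $h\mapsto(\ell h,kh)$ noted above. As a consistency check, once $\Sigma_1=\pi^2/6-1$ is established later, the result agrees with Yang's identity $\Sigma_0-\Sigma_1=1$.
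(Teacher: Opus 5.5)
Your proposal is correct and matches the paper's proof: you collapse the double sum to the pairs $(\ell h,kh)$ using the support statement of Lemma~\ref{lem:sigma0-support}, then sum the values $1/(h+1)^2$ to get $\pi^2/6$. Your remark on the injectivity of $h\mapsto(\ell h,kh)$ is correct bookkeeping that the paper leaves implicit.
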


\begin{proof}
By Lemma~\ref{lem:sigma0-support},
\[
\begin{aligned}
\Sigma_0(M_{k,\ell})
&=
\sum_{h=0}^{\infty}
\left|
\langle\phi_{\ell h},\psi_{kh}\rangle
\right|^2=
\sum_{h=0}^{\infty}\frac1{(h+1)^2}
=
\frac{\pi^2}{6}.
\end{aligned}
\]
\end{proof}

We now turn to the higher-order inner products. For $j\geq1$, we
determine both the exact pairs $(n,m)$ for which
$\langle w^j\varphi_n,z^j\psi_m\rangle$
is nonzero and the corresponding inner-product values. Set
\begin{equation}
\label{eq:AjBj}
A_j
=
\left\lceil\frac{j}{\ell}\right\rceil,
\qquad
B_j
=
\left\lceil\frac{j}{k}\right\rceil.
\end{equation}

\begin{theorem}
\label{thm:higher-support}
Let $j\geq1$, and set
\[
A=A_j=\left\lceil\frac{j}{\ell}\right\rceil,
\qquad
B=B_j=\left\lceil\frac{j}{k}\right\rceil.
\]
If
$\langle w^j\phi_n,z^j\psi_m\rangle\neq0, n,m\geq0$,
then there exists a unique integer $h$ such that
\begin{equation}
\label{eq:nm-h}
n=\ell h-j,
\qquad
m=kh-j.
\end{equation}
Necessarily,
$h\geq\max\{A,B\}$.

For every integer $h\geq\max\{A,B\}$, we have
\begin{equation}
\label{eq:exact-alpha}
\left\langle
w^j\phi_{\ell h-j},
z^j\psi_{kh-j}
\right\rangle
=
\begin{cases}
0,
\,\,\,\,\,\,\,\,\,\,\,\,\,\,\,\,\,\,\,\,\,\,\,\,\,\,\,\,\,\,\,\,\,\,\,\,\,\,\,\,\,\,\,\,\,\,\,\,\,\,\,\,\,\,\,\,\,\,\,\,\,\,\,\,\,\,\,\,\,\,\,\,\,\,\,\,\,\,\,\,\,\,\,\,\,\,\max\{A,B\}\leq h<A+B-1,
\\[3mm]
\displaystyle
-\frac{h-A-B+2}
{
\sqrt{
(h-A+1)(h-A+2)
(h-B+1)(h-B+2)
}
}, \,\,\,h\geq A+B-1.
\end{cases}
\end{equation}
Consequently,
\begin{equation}
\label{eq:exact-higher-support}
\langle w^j\phi_n,z^j\psi_m\rangle\neq0
\end{equation}
if and only if there exists an integer $h\geq A+B-1$ such that
$n=\ell h-j,
m=kh-j$.
\end{theorem}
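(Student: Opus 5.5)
The plan is a direct computation in the style of Lemma~\ref{lem:sigma0-support}: locate the support using \eqref{eq:basic-support}, then rewrite both vectors in one common chain and evaluate a bilinear form against the tridiagonal matrix $T_{h+1}$.

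\textbf{Support.} Expanding \eqref{eq:phi-basis} and \eqref{eq:psi-basis}, the vector $w^j\phi_n$ is a combination of the terms $pz^{kr}w^{n-\ell r+j}$, and $z^j\psi_m$ is a combination of the terms $pz^{m-ks+j}w^{\ell s}$. If some pair has nonzero inner product, then \eqref{eq:basic-support} gives $\varepsilon\in\{-1,0,1\}$ with
\[
m+j-ks-kr=\varepsilon k,\qquad \ell s-(n+j-\ell r)=-\varepsilon\ell .
\]
Setting $h=r+s+\varepsilon$, this says $n+j=\ell h$ and $m+j=kh$, which is \eqref{eq:nm-h}. Uniqueness of $h$ is immediate from $n=\ell h-j$. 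The requirement $n\ge0$ gives $h\ge j/\ell$, hence $h\ge A$; likewise $m\ge0$ gives $h\ge B$.

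\textbf{Common chain.} Fix $h\ge\max\{A,B\}$ and put $n=\ell h-j$, $m=kh-j$. Let
\[
u_t=pz^{kt}w^{\ell(h-t)},\qquad 0\le t\le h,
\]
whose Gram matrix is $T_{h+1}$ by Lemma~\ref{lem:tridiagonal-chain}. Using $\lfloor N-x\rfloor=N-\lceil x\rceil$, we get
\[
N_n=\Bigl\lfloor h-\tfrac{j}{\ell}\Bigr\rfloor+1=h-A+1,\qquad L_m=h-B+1 .
\]
Write $N=h-A+1$ and $L=h-B+1$.

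\begin{itemize}
\item The term $w^j\,pz^{kr}w^{n-\ell r}$ equals $u_r$. So the coefficient vector of $w^j\phi_n$ is $c_t=(N-t)_+$, up to the factor $(N(N+1))^{-1/2}$.
\item After substituting $t=h-s$, the term $z^j\,pz^{m-ks}w^{\ell s}$ equals $u_{h-s}$. So the coefficient vector of $z^j\psi_m$ is $d_t=(t-B+1)_+$, supported on $B\le t\le h$, up to the factor $(L(L+1))^{-1/2}$.
\end{itemize}

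Hence
\[
\langle w^j\phi_n,z^j\psi_m\rangle=\frac{c^{\mathsf T}T_{h+1}d}{\sqrt{N(N+1)L(L+1)}} .
\]

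\textbf{Second-difference evaluation.} Since $(T_{h+1}d)_t=-d_{t-1}+2d_t-d_{t+1}$ with $d_{-1}=d_{h+1}=0$, this vector vanishes wherever $d$ is locally affine. It can be nonzero only at two points:
\begin{itemize}
\item at the kink $t=B-1$, where $(T_{h+1}d)_{B-1}=-d_B=-1$; this point is a valid index because $B\ge1$, and when $B=1$ it is the left boundary $t=0$, still with value $-1$;
\item at the top endpoint $t=h$, where $(T_{h+1}d)_h=2d_h-d_{h-1}=h-B+2$.
\end{itemize}
Therefore
\[
c^{\mathsf T}T_{h+1}d=-c_{B-1}+(h-B+2)\,c_h .
\]
Because $A\ge1$, we have $c_h=(1-A)_+=0$. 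Also $c_{B-1}=(N-B+1)_+=(h-A-B+2)_+$. This gives \eqref{eq:exact-alpha}: the value is $0$ for $h\le A+B-2$ and equals the stated negative quantity for $h\ge A+B-1$. The value is nonzero exactly when $h\ge A+B-1$, which yields \eqref{eq:exact-higher-support}.

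\textbf{Main obstacle.} The delicate step is the boundary bookkeeping. One must check that $N$ and $L$ come out exactly as $h-A+1$ and $h-B+1$, which is where the ceilings enter. One must also handle the edge cases $B=1$ (the kink coincides with the left boundary) and $N\le B-1$ (the positive part cuts off $c_{B-1}$), and confirm that $c$ never reaches the top endpoint because $A\ge1$.
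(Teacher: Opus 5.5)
Your proposal is correct and follows essentially the paper's own proof. You locate the support with the basic support relation via $h=r+s+\varepsilon$, rewrite both shifted vectors in the common chain $u_t=pz^{kt}w^{\ell(h-t)}$ with Gram matrix $T_{h+1}$, and use $T_{h+1}d=-e_{B-1}+(h-B+2)e_h$ together with $c_h=0$ (from $A\geq1$) to reduce the inner product to $-c_{B-1}$; your positive-part notation for $c$ and $d$ is just a compact form of the paper's case-by-case definitions.
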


\begin{proof}
Recall from \eqref{eq:phi-basis} and \eqref{eq:psi-basis} that
\[
\phi_n
=
\frac{1}{\sqrt{N_n(N_n+1)}}
\sum_{\rho=0}^{N_n-1}
(N_n-\rho)
p z^{k\rho}w^{n-\ell\rho},
\]
where
$N_n=\left\lfloor\frac{n}{\ell}\right\rfloor+1$,
and
\[
\psi_m
=
\frac{1}{\sqrt{L_m(L_m+1)}}
\sum_{\sigma=0}^{L_m-1}
(L_m-\sigma)
p z^{m-k\sigma}w^{\ell\sigma},
\]
where
$L_m=\left\lfloor\frac{m}{k}\right\rfloor+1$.

Suppose first that
$\langle w^j\phi_n,z^j\psi_m\rangle\neq0$.
Since the expansions of $\phi_n$ and $\psi_m$ above are finite, there must exist indices
\[
0\leq\rho\leq N_n-1,
\qquad
0\leq\sigma\leq L_m-1,
\]
such that
\[
\left\langle
p z^{k\rho}w^{n-\ell\rho+j},
p z^{m-k\sigma+j}w^{\ell\sigma}
\right\rangle
\neq0.
\]
By the basic support relation \eqref{eq:basic-support}, there exists
$\varepsilon\in\{-1,0,1\}$
such that
$m-k\sigma+j-k\rho=\varepsilon k$
and
$\ell\sigma-(n-\ell\rho+j)=-\varepsilon\ell$.

Equivalently, we have
$m+j=k(\rho+\sigma+\varepsilon)$
and
$n+j=\ell(\rho+\sigma+\varepsilon)$.

Define
$h=\rho+\sigma+\varepsilon\in\mathbb Z$.
Then we get
$n=\ell h-j,
m=kh-j$,
which proves \eqref{eq:nm-h}. The integer $h$ is unique, since
\[
h=\frac{n+j}{\ell}=\frac{m+j}{k}.
\]

Since $n,m\geq0$, \eqref{eq:nm-h} implies
$\ell h-j\geq0
$ and $
kh-j\geq0$.
Hence, we have
$h\geq\frac{j}{\ell}
$ and $h\geq\frac{j}{k}$.
As $h$ is an integer,
\[
h\geq
\left\lceil\frac{j}{\ell}\right\rceil=A
\qquad\text{and}\qquad
h\geq
\left\lceil\frac{j}{k}\right\rceil=B.
\]
Therefore $h\geq\max\{A,B\}.$
In particular, all defect vectors appearing below have nonnegative
indices.

We now fix an integer
$h\geq\max\{A,B\}$
and calculate
$\left\langle
w^j\phi_{\ell h-j},
z^j\psi_{kh-j}
\right\rangle$.

For
$n=\ell h-j$, the identity
$\lfloor N-x\rfloor=N-\lceil x\rceil, N\in\mathbb Z$
gives
\[
\begin{aligned}
N_n
&=
\left\lfloor
\frac{\ell h-j}{\ell}
\right\rfloor+1=
\left\lfloor
h-\frac{j}{\ell}
\right\rfloor+1=
h-\left\lceil\frac{j}{\ell}\right\rceil+1=
h-A+1.
\end{aligned}
\]

Similarly, for
$m=kh-j$, we obtain
\[
L_m
=
\left\lfloor
\frac{kh-j}{k}
\right\rfloor+1
=
h-B+1.
\]
Since $h\geq\max\{A,B\}$, both $N_n$ and $L_m$ are positive integers.

For $0\leq r\leq h$, set $
u_r=
p z^{kr}w^{\ell(h-r)}.$
Then the Gram matrix of
$u_0,u_1,\ldots,u_h$
is the tridiagonal matrix $T_{h+1}$ from
Lemma~\ref{lem:tridiagonal-chain}.

Using the formula for $\phi_n$, we obtain
\begin{align}
w^j\phi_{\ell h-j}
&=
\frac{1}
{\sqrt{(h-A+1)(h-A+2)}}
\sum_{r=0}^{h-A}
(h-A+1-r)
p z^{kr}w^{\ell h-j-\ell r+j}
\nonumber\\
&=
\frac{
\displaystyle
\sum_{r=0}^{h-A}
(h-A+1-r)u_r
}
{\sqrt{(h-A+1)(h-A+2)}}.
\label{eq:shifted-phi-1}
\end{align}

Similarly,
\begin{align}
z^j\psi_{kh-j}
&=
\frac{1}
{\sqrt{(h-B+1)(h-B+2)}}
\sum_{\sigma=0}^{h-B}
(h-B+1-\sigma)
p z^{kh-j-k\sigma+j}w^{\ell\sigma}
\nonumber\\
&=
\frac{1}
{\sqrt{(h-B+1)(h-B+2)}}
\sum_{\sigma=0}^{h-B}
(h-B+1-\sigma)
p z^{k(h-\sigma)}w^{\ell\sigma}.
\label{eq:shifted-psi-1}
\end{align}
Making the change of index
$r=h-\sigma$,
we have
\[
\sigma=0,\ldots,h-B
\quad\Longleftrightarrow\quad
r=h,\ldots,B.
\]
Therefore \eqref{eq:shifted-psi-1} becomes
\begin{equation}
\label{eq:shifted-psi}
z^j\psi_{kh-j}
=
\frac{
\displaystyle
\sum_{r=B}^{h}
(r-B+1)u_r
}
{\sqrt{(h-B+1)(h-B+2)}}.
\end{equation}

Let
\[
c=(c_0,c_1,\ldots,c_h)^T,
\qquad
d=(d_0,d_1,\ldots,d_h)^T,
\]
where
\[
c_r
=
\begin{cases}
h-A+1-r,
&0\leq r\leq h-A,
\\
0,
&h-A<r\leq h,
\end{cases}
\]
and
\[
d_r
=
\begin{cases}
0,
&0\leq r<B,
\\
r-B+1,
&B\leq r\leq h.
\end{cases}
\]
Since $j\geq1$, we have
$A\geq1,
B\geq1$.
Hence, we get 
$h-A\leq h-1$,
and therefore
\begin{equation}
\label{eq:ch-zero}
c_h=0.
\end{equation}

Let
$e_0,e_1,\ldots,e_h$
denote the standard coordinate vectors of $\mathbb C^{h+1}$. Since $d_r=0$ for $r<B$ and $d_r=r-B+1$ for $r\geq B$, its
discrete second difference vanishes away from the two boundary
indices.  A direct multiplication, including the boundary indices, gives
\begin{equation}
\label{eq:Td}
T_{h+1}d
=
-e_{B-1}
+
(h-B+2)e_h.
\end{equation}




Since the Gram matrix of $u_0,\ldots,u_h$ is $T_{h+1}$,
equations \eqref{eq:shifted-phi-1} and \eqref{eq:shifted-psi} give
\begin{align}
&
\left\langle
w^j\phi_{\ell h-j},
z^j\psi_{kh-j}
\right\rangle
\nonumber=
\frac{
c^T T_{h+1}d
}
{
\sqrt{
(h-A+1)(h-A+2)
(h-B+1)(h-B+2)
}
}.
\label{eq:inner-product-matrix}
\end{align}

By \eqref{eq:Td} and \eqref{eq:ch-zero},
\[
c^T T_{h+1}d=
-c_{B-1}
+
(h-B+2)c_h=
-c_{B-1}.
\]
It remains only to determine $c_{B-1}$.

If
$\max\{A,B\}\leq h<A+B-1$,
then
$h-A<B-1$,
and hence, by the definition of $c$,
$c_{B-1}=0$.

Therefore,
\[
\left\langle
w^j\phi_{\ell h-j},
z^j\psi_{kh-j}
\right\rangle
=0.
\]

If instead
$h\geq A+B-1$,
then
$B-1\leq h-A$,
so
\[
\begin{aligned}
c_{B-1}
&=
h-A+1-(B-1)=
h-A-B+2.
\end{aligned}
\]
Therefore
\[
\left\langle
w^j\phi_{\ell h-j},
z^j\psi_{kh-j}
\right\rangle
=
-\frac{h-A-B+2}
{
\sqrt{
(h-A+1)(h-A+2)
(h-B+1)(h-B+2)
}
}.
\]
This proves \eqref{eq:exact-alpha}.

Finally, if $h\geq A+B-1$, then
$h-A-B+2\geq1$,
and all four factors in the denominator are strictly positive.
Hence the second quantity in \eqref{eq:exact-alpha} is nonzero.
Combining this fact with the necessity of \eqref{eq:nm-h} proved at
the beginning yields
$\langle w^j\phi_n,z^j\psi_m\rangle\neq0$
if and only if there exists an integer
$h\geq A+B-1$
such that
\[
n=\ell h-j,
\qquad
m=kh-j.
\]
This proves the exact support assertion
\eqref{eq:exact-higher-support}.
\end{proof}

\subsection{The complete invariant sequence and closed forms}

The support formula in Theorem~\ref{thm:higher-support} now yields the complete
higher-order invariant sequence.

\begin{theorem}
\label{thm:complete-sigma}
Let
$k,\ell\in\mathbb N,
k\neq\ell$,
and let
$M_{k,\ell}=[z^k-w^\ell]$. For $j\geq1$, set
\[
A_j=\left\lceil\frac{j}{\ell}\right\rceil,
\qquad
B_j=\left\lceil\frac{j}{k}\right\rceil.
\]
Then
\begin{equation}
\label{eq:complete-sigma}
\Sigma_j(M_{k,\ell})
=
\sum_{r=0}^{\infty}
\frac{(r+1)^2}
{
(r+A_j)(r+A_j+1)
(r+B_j)(r+B_j+1)
},
\qquad j\geq1.
\end{equation}
\end{theorem}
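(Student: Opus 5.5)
The plan is to substitute the exact support and values from Theorem~\ref{thm:higher-support} directly into the double-sum formula \eqref{eq:general-Sigma}. Fix $j\geq1$ and write $A=A_j$, $B=B_j$. By Theorem~\ref{thm:defect-bases}, $\{\phi_n\}_{n\geq0}$ and $\{\psi_m\}_{m\geq0}$ are orthonormal bases of the two defect spaces. Since \eqref{eq:general-double-sum} does not depend on the choice of bases, we may compute $\Sigma_j(M_{k,\ell})$ with these particular ones.

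Theorem~\ref{thm:higher-support} says a pair $(n,m)$ contributes a nonzero term exactly when $n=\ell h-j$ and $m=kh-j$ for some integer $h\geq A+B-1$. Moreover $h$ is uniquely determined by the pair, since $h=(n+j)/\ell$. So the map $h\mapsto(\ell h-j,kh-j)$ from $\{h\geq A+B-1\}$ onto the support is a bijection. The double sum therefore collapses to a single series:
\[
\Sigma_j(M_{k,\ell})=\sum_{h\geq A+B-1}\frac{(h-A-B+2)^2}{(h-A+1)(h-A+2)(h-B+1)(h-B+2)}.
\]
All terms are nonnegative, so rearranging is harmless even before we know the sum is finite.

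Next we reindex with $r=h-A-B+1\geq0$. This gives $h-A-B+2=r+1$, $h-A+1=r+B$, $h-A+2=r+B+1$, $h-B+1=r+A$ and $h-B+2=r+A+1$. The summand becomes $(r+1)^2/\bigl((r+A)(r+A+1)(r+B)(r+B+1)\bigr)$, which is exactly \eqref{eq:complete-sigma}. For finiteness, note that $A,B\geq1$, so the summand is $O(r^{-2})$ and the series converges.

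There is no real obstacle. The only point that needs care is the bookkeeping: every pair $(n,m)$ outside the stated support contributes zero, including those with $\max\{A,B\}\leq h<A+B-1$, and each support pair is counted exactly once. Both facts are already supplied by Theorem~\ref{thm:higher-support}.
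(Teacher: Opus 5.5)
Your proposal is correct and matches the paper's proof. Both collapse the double sum \eqref{eq:general-Sigma} using the exact support and values from Theorem~\ref{thm:higher-support}, reindex by $r=h-A-B+1$, and get convergence from an $O(r^{-2})$ comparison; the paper also states explicitly that $h\geq A+B-1$ forces $\ell h-j\geq0$ and $kh-j\geq0$ (because $A,B\geq1$), which your appeal to that theorem covers implicitly.
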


\begin{proof}
By Theorem~\ref{thm:higher-support}, the double sum
\eqref{eq:general-Sigma} has nonzero terms only for
\[
n=\ell h-j,
\qquad
m=kh-j.
\]
Moreover, the corresponding inner product vanishes unless
$h\geq A_j+B_j-1$.

Since
$A_j\geq1,
B_j\geq1$,
every integer
$h\geq A_j+B_j-1$
satisfies
$h\geq A_j
$ and
$h\geq B_j$. Therefore
$\ell h-j\geq0,
kh-j\geq0$,
so the indices of
$\phi_{\ell h-j}$ and
$\psi_{kh-j}$
are admissible throughout the effective summation range.

Hence, writing $A=A_j$ and $B=B_j$,
\[
\Sigma_j(M_{k,\ell})
=
\sum_{h=A+B-1}^{\infty}
\frac{(h-A-B+2)^2}
{
(h-A+1)(h-A+2)
(h-B+1)(h-B+2)
}.
\]
Let
$r=h-A-B+1$.
Then 
\[
\Sigma_j(M_{k,\ell})
=
\sum_{r=0}^{\infty}
\frac{(r+1)^2}
{
(r+A)(r+A+1)
(r+B)(r+B+1)
},
\]
which proves \eqref{eq:complete-sigma}.

Since $A,B\geq1$, for $r\geq1$ we have
\[
0\leq
\frac{(r+1)^2}
{(r+A)(r+A+1)(r+B)(r+B+1)}
\leq
\frac{4}{r^2}.
\]
Hence the series converges absolutely by comparison with
$\sum_{r\geq1}r^{-2}$.
\end{proof}


Define
\[
F(\mu,\nu)
=
\sum_{r=0}^{\infty}
\frac{(r+1)^2}
{(r+\mu)(r+\mu+1)(r+\nu)(r+\nu+1)},
\qquad
\mu,\nu\in\mathbb N.
\]
Then 
\begin{equation}
\label{eq:Sigma-F}
\Sigma_j(M_{k,\ell})
=
F\left(
\left\lceil\frac{j}{\ell}\right\rceil,
\left\lceil\frac{j}{k}\right\rceil
\right),
\qquad
j\geq1.
\end{equation}
Clearly,
$F(\mu,\nu)=F(\nu,\mu)$.

As a first consequence, since
$A_1=B_1=1$,
we obtain
\[
\begin{aligned}
\Sigma_1(M_{k,\ell})
&=
F(1,1)=
\sum_{r=0}^{\infty}
\frac1{(r+2)^2}=
\frac{\pi^2}{6}-1.
\end{aligned}
\]


Thus the direct calculation recovers the previously known low-order
values
\[
\Sigma_0(M_{k,\ell})=\frac{\pi^2}{6},
\qquad
\Sigma_1(M_{k,\ell})=\frac{\pi^2}{6}-1.
\]
These values and the Hilbert--Schmidt property also follow from the
earlier $M_{\psi,\varphi}$- and $M_{\theta,\varphi}$-type theory.
Moreover, by \cite[Theorem~4.2(i)]{YangHS2005},
\[
\|C_{M_{k,\ell}}\|_{S_2}^2
=
\Sigma_0(M_{k,\ell})+\Sigma_1(M_{k,\ell})
=
\frac{\pi^2}{3}-1.
\]
The new content of Theorem~\ref{thm:complete-sigma} is the determination of every higher
invariant $\Sigma_j(M_{k,\ell})$, $j\geq2$.

Formula~\eqref{eq:complete-sigma} already determines the complete invariant sequence in
series form. We next evaluate the function $F(\mu,\nu)$ explicitly,
thereby obtaining closed forms for every member of the sequence.

For the closed-form evaluations below, we use the harmonic numbers
\[
H_n=\sum_{r=1}^n\frac1r,
\qquad
H_n^{(2)}=\sum_{r=1}^n\frac1{r^2},
\qquad n\ge1,
\]
with the convention
$H_0=H_0^{(2)}=0$.





\begin{theorem}
\label{thm:F-closed}
Let $\mu,\nu\in\mathbb N$, and set
\[
u=\min\{\mu,\nu\},
\qquad
\eta=\max\{\mu,\nu\},
\qquad
\tau=\eta-u.
\]
Then
$F(\mu,\nu)=F(u,\eta)$,
and the following formulas hold.

\begin{enumerate}
\item[(i)] If $\tau=0$, equivalently $\mu=\nu=u$, then
\begin{equation}
\label{eq:F-diagonal}
F(u,u)
=
(2u^2-2u+1)
\left(
\frac{\pi^2}{6}-H_{u-1}^{(2)}
\right)
-(2u-1).
\end{equation}

\item[(ii)] If $\tau=1$, then
\begin{equation}
\label{eq:F-adjacent}
F(u,u+1)
=
u-\frac12+\frac{1}{2u}
-u^2
\left(
\frac{\pi^2}{6}-H_u^{(2)}
\right).
\end{equation}

\item[(iii)] If $\tau\geq2$, then
\begin{equation}
\label{eq:F-separated}
F(u,\eta)
=
\frac{
\tau(u+\eta-1)
-
(2u\eta-u-\eta+1)
\bigl(H_{\eta-1}-H_{u-1}\bigr)
}{
\tau(\tau^2-1)
}.
\end{equation}
\end{enumerate}

In particular, if $|\mu-\nu|\geq2$, then
$F(\mu,\nu)\in\mathbb Q$.
\end{theorem}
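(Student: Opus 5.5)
Symmetry is immediate from the definition of $F$, since the summand is symmetric in $(\mu,\nu)$. So we assume $\mu=u\le\eta=\nu$ and use partial fractions in $r$. Write the summand as
\[
g(r)=\frac{(r+1)^2}{(r+u)(r+u+1)(r+\eta)(r+\eta+1)}.
\]
The three cases correspond to how the four linear factors coincide: all distinct when $\tau\ge2$, one double pole when $\tau=1$, and two double poles when $\tau=0$.

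\emph{Case $\tau\ge2$.} Here the poles $-u,-u-1,-\eta,-\eta-1$ are simple. We expand
\[
g(r)=\frac{\alpha_0}{r+u}+\frac{\alpha_1}{r+u+1}+\frac{\beta_0}{r+\eta}+\frac{\beta_1}{r+\eta+1}.
\]
Each residue is computed by evaluation. For example, $\alpha_0=(1-u)^2/[(\tau)(\tau+1)]$, $\alpha_1=u^2/[(-1)(\tau-1)\tau]$, and similarly for $\beta_0,\beta_1$.

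Since $g(r)=O(r^{-2})$, the residues sum to zero. Hence the series telescopes into finite harmonic differences, via
\[
\sum_{r\ge0}\Big(\frac{1}{r+a}-\frac{1}{r+b}\Big)=H_{b-1}-H_{a-1}.
\]
Writing everything relative to $\frac1{r+u}$ gives
\[
F=-(\alpha_1)H_u'-\beta_0(H_{\eta-1}-H_{u-1})-\beta_1(H_\eta-H_{u-1}),
\]
with the appropriate shifts. The next step is to collect the coefficient of $H_{\eta-1}-H_{u-1}$ and the leftover rational terms (the $1/u$ and $1/\eta$ pieces). This should yield the numerator $\tau(u+\eta-1)-(2u\eta-u-\eta+1)(H_{\eta-1}-H_{u-1})$ over $\tau(\tau^2-1)$. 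Rationality for $|\mu-\nu|\ge2$ follows, since only $H_n$ appear.

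\emph{Case $\tau=1$.} The factor $(r+u+1)$ appears squared:
\[
g(r)=\frac{(r+1)^2}{(r+u)(r+u+1)^2(r+u+2)}.
\]
Expand as $a/(r+u)+b/(r+u+1)+c/(r+u+1)^2+d/(r+u+2)$ with $a+b+d=0$. The simple-pole part telescopes to rational numbers involving $1/u$ and $1/(u+1)$. The double-pole part gives $c\sum_{r\ge0}(r+u+1)^{-2}=c(\pi^2/6-H_u^{(2)})$, with $c=-u^2$ obtained by evaluating $(r+1)^2/[(r+u)(r+u+2)]$ at $r=-u-1$. Matching the rational part against $u-\tfrac12+\tfrac1{2u}$ is then a finite check.

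\emph{Case $\tau=0$.} Here
\[
g(r)=\frac{(r+1)^2}{(r+u)^2(r+u+1)^2},
\]
which we expand in $1/(r+u)$, $1/(r+u)^2$, $1/(r+u+1)$ and $1/(r+u+1)^2$. The two squared terms contribute $\pi^2/6-H^{(2)}_{u-1}$ and $\pi^2/6-H^{(2)}_u$. Using $H^{(2)}_u=H^{(2)}_{u-1}+u^{-2}$, these combine into the coefficient $(u-1)^2+u^2=2u^2-2u+1$ times $\pi^2/6-H^{(2)}_{u-1}$, plus rational pieces. The simple-pole coefficients are determined by differentiation and sum to zero, telescoping to a single term. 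Checking against $u=1$ ($F(1,1)=\pi^2/6-1$) confirms the constant $-(2u-1)$.

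The main obstacle is the bookkeeping in case (iii): assembling the four residues into the stated symmetric form with denominator $\tau(\tau^2-1)$. I would first verify it on a small instance such as $u=1,\eta=3$ before carrying out the algebra in general.
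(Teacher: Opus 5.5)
Your proposal is correct and follows essentially the same route as the paper: a partial-fraction decomposition in $r$ for each of the three pole configurations, cancellation of the simple-pole coefficients so that the series telescopes to finite harmonic differences, and $\sum_{r\ge0}(r+s)^{-2}=\pi^2/6-H^{(2)}_{s-1}$ for the double poles. Two points need tightening. In case (i), the constant $-(2u-1)$ must be derived, not confirmed by a check at $u=1$: the residue $-2u(u-1)$ at $r=-u$ telescopes to $-2(u-1)$, and rewriting $u^2\bigl(\pi^2/6-H^{(2)}_u\bigr)$ in terms of $H^{(2)}_{u-1}$ contributes a further $-1$. In case (iii), your undefined term $-\alpha_1 H_u'$ should read $-\alpha_1/u$.
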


\begin{proof}
For the diagonal case, a direct partial-fraction decomposition gives
\[
\frac{(r+1)^2}
{(r+u)^2(r+u+1)^2}
=
-\frac{2u(u-1)}{r+u}
+
\frac{(u-1)^2}{(r+u)^2}
+
\frac{2u(u-1)}{r+u+1}
+
\frac{u^2}{(r+u+1)^2}.
\]
Summing over $r\geq0$ and using
$\sum_{r=0}^{\infty}
\left(
\frac{1}{r+u}
-
\frac{1}{r+u+1}
\right)
=
\frac1u$
and
$\sum_{r=0}^{\infty}\frac{1}{(r+u)^2}
=
\frac{\pi^2}{6}-H_{u-1}^{(2)}$
gives \eqref{eq:F-diagonal}.

If $\tau=1$, then $\eta=u+1$, and
\[
\frac{(r+1)^2}
{(r+u)(r+u+1)^2(r+u+2)}
=
\frac{(u-1)^2}{2(r+u)}
+
\frac{2u}{r+u+1}
-
\frac{u^2}{(r+u+1)^2}
-
\frac{(u+1)^2}{2(r+u+2)}.
\]

Summing this identity over $0\leq r\leq N$ and using
\[
\sum_{r=0}^{N}\frac1{r+s}=H_{N+s}-H_{s-1},
\qquad
\sum_{r=0}^{N}\frac1{(r+s)^2}
=
H_{N+s}^{(2)}-H_{s-1}^{(2)},
\]
we note that the coefficients of the first-order harmonic terms sum
to zero:
\[
\frac{(u-1)^2}{2}+2u-\frac{(u+1)^2}{2}=0.
\]
Hence the logarithmic terms cancel as $N\to\infty$, while
$H_{N+u+1}^{(2)}\to\pi^2/6$.  It follows that
\[
F(u,u+1)
=
-\frac{(u-1)^2}{2}H_{u-1}
-2uH_u
+\frac{(u+1)^2}{2}H_{u+1}
-u^2\left(\frac{\pi^2}{6}-H_u^{(2)}\right).
\]
Using
\[
H_{u-1}=H_u-\frac1u,
\qquad
H_{u+1}=H_u+\frac1{u+1},
\]
gives \eqref{eq:F-adjacent}.

Finally, assume
$\tau=\eta-u\geq2$.
We have
\[
\begin{aligned}
\frac{(r+1)^2}
{(r+u)(r+u+1)(r+\eta)(r+\eta+1)}
&=
\frac{(u-1)^2}{\tau(\tau+1)}
\frac{1}{r+u}
-
\frac{u^2}{\tau(\tau-1)}
\frac{1}{r+u+1}\\
&\,\,\,\,\,\,+
\frac{(\eta-1)^2}{\tau(\tau-1)}
\frac{1}{r+\eta}
-
\frac{\eta^2}{\tau(\tau+1)}
\frac{1}{r+\eta+1}.
\end{aligned}
\]

Again, we sum first over $0\leq r\leq N$. Thus
\[
\begin{aligned}
\sum_{r=0}^{N}
\frac{(r+1)^2}
{(r+u)(r+u+1)(r+\eta)(r+\eta+1)}
&=
\frac{(u-1)^2}{\tau(\tau+1)}
\bigl(H_{N+u}-H_{u-1}\bigr)-
\frac{u^2}{\tau(\tau-1)}
\bigl(H_{N+u+1}-H_u\bigr)
\\
&\,\,\,\,\,\,+
\frac{(\eta-1)^2}{\tau(\tau-1)}
\bigl(H_{N+\eta}-H_{\eta-1}\bigr)
-
\frac{\eta^2}{\tau(\tau+1)}
\bigl(H_{N+\eta+1}-H_\eta\bigr).
\end{aligned}
\]


The logarithmic terms cancel because
the four coefficients satisfy
\[
\frac{(u-1)^2}{\tau(\tau+1)}
-\frac{u^2}{\tau(\tau-1)}
+\frac{(\eta-1)^2}{\tau(\tau-1)}
-\frac{\eta^2}{\tau(\tau+1)}
=0.
\]
Letting $N\to\infty$ and simplifying the remaining finite harmonic
terms yields
\[
F(u,\eta)
=
\frac{
\tau(u+\eta-1)
-(2u\eta-u-\eta+1)(H_{\eta-1}-H_{u-1})
}{
\tau(\tau^2-1)
},
\]
which proves \eqref{eq:F-separated}.

Since
$H_{\eta-1}-H_{u-1}$
is a finite rational number, \eqref{eq:F-separated} also shows that
$F(u,\eta)\in\mathbb Q$
whenever $\eta-u\geq2$. Equivalently,
$F(\mu,\nu)\in\mathbb Q$
whenever $|\mu-\nu|\geq2$.
\end{proof}


The diagonal evaluation $F(u,u)$ agrees with the homogeneous formula
in~\cite{LiuLuZu2026block}.  For $k\neq\ell$, both diagonal and off-diagonal pairs
$(A_j,B_j)$ may occur; the latter have no analogue in the symmetric
case.

Theorem~\ref{thm:sigma0}, 
\ref{thm:complete-sigma} and \ref{thm:F-closed} therefore give explicit
closed forms for the entire sequence
$\{\Sigma_j(M_{k,\ell})\}_{j\geq0}$.
We next use the dependence of $\Sigma_j(M_{k,\ell})$ on the two ceiling
parameters
\[
\left\lceil \frac{j}{\ell}\right\rceil
\qquad\text{and}\qquad
\left\lceil \frac{j}{k}\right\rceil
\]
to determine its monotonicity and exact block structure.

\subsection{Monotonicity and the staircase structure}

We next determine exactly when two consecutive numerical invariants are
equal.


\begin{lemma}
\label{lem:F-monotone}
For all $\mu,\nu\in\mathbb N$,
\[
F(\mu+1,\nu)<F(\mu,\nu),
\qquad
F(\mu,\nu+1)<F(\mu,\nu).
\]
More generally, if
\[
\mu',\nu'\in\mathbb N,
\qquad
\mu'\geq\mu,
\qquad
\nu'\geq\nu,
\]
and at least one of the last two inequalities is strict, then
\[
F(\mu',\nu')<F(\mu,\nu).
\]
\end{lemma}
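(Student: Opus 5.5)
The plan is to argue termwise from the series definition of $F$, rather than from the closed forms of Theorem~\ref{thm:F-closed}. For fixed $r\ge0$ and $\nu\in\mathbb N$, write the general term as
\[
f_r(\mu,\nu)=\frac{(r+1)^2}{(r+\mu)(r+\mu+1)(r+\nu)(r+\nu+1)}.
\]
As a function of $\mu\in\mathbb N$, the denominator factor $(r+\mu)(r+\mu+1)$ is strictly increasing, because $\mu\mapsto (r+\mu)(r+\mu+1)$ is strictly increasing on $[1,\infty)$ for $r\ge0$. All other factors are positive. Hence $f_r(\mu+1,\nu)<f_r(\mu,\nu)$ for every $r\ge0$, with strict inequality already at $r=0$.

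Since both series $F(\mu,\nu)$ and $F(\mu+1,\nu)$ converge absolutely (comparison with $4/r^2$, as in the proof of Theorem~\ref{thm:complete-sigma}), we may subtract them termwise. This gives
\[
F(\mu,\nu)-F(\mu+1,\nu)=\sum_{r\ge0}\bigl(f_r(\mu,\nu)-f_r(\mu+1,\nu)\bigr).
\]
The right-hand side is a convergent series of strictly positive terms, so it is strictly positive. The inequality $F(\mu,\nu+1)<F(\mu,\nu)$ then follows at once from the symmetry $F(\mu,\nu)=F(\nu,\mu)$, or by repeating the argument in the second variable.

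For the general statement, take $\mu'\ge\mu$ and $\nu'\ge\nu$, not both equalities. Chain the one-step inequalities along the path
\[
(\mu,\nu)\to(\mu+1,\nu)\to\cdots\to(\mu',\nu)\to(\mu',\nu+1)\to\cdots\to(\mu',\nu').
\]
Every step is a weak decrease, namely a strict one-step inequality applied, or no step at all. At least one step actually occurs because $(\mu',\nu')\ne(\mu,\nu)$, so the total inequality is strict.

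There is no real obstacle here. The only point needing care is to justify the termwise comparison through absolute convergence, so that strict termwise inequality passes to strict inequality of the sums. Here this is immediate because all terms are positive.
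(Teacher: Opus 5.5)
Your proposal is correct and follows essentially the same route as the paper: both compare the summands termwise, sum using convergence, get the second inequality from symmetry, and iterate for the general statement. The only difference is cosmetic: the paper writes the termwise difference explicitly as $\frac{2(r+1)^2}{(r+\mu)(r+\mu+1)(r+\mu+2)(r+\nu)(r+\nu+1)}>0$, while you get its positivity from the strict monotonicity of $(r+\mu)(r+\mu+1)$ in $\mu$.
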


\begin{proof}
For each $r\geq0$,
\[
\begin{aligned}
&
\frac{(r+1)^2}
{(r+\mu)(r+\mu+1)(r+\nu)(r+\nu+1)}
-
\frac{(r+1)^2}
{(r+\mu+1)(r+\mu+2)(r+\nu)(r+\nu+1)}
\\
&=
\frac{2(r+1)^2}
{(r+\mu)(r+\mu+1)(r+\mu+2)(r+\nu)(r+\nu+1)}
>0.
\end{aligned}
\]

Since both series converge absolutely, summation yields
$F(\mu,\nu)>F(\mu+1,\nu)$.
The second inequality follows by symmetry.  Iteration proves the final
assertion.
\end{proof}

For the ceiling functions in \eqref{eq:AjBj}, we have
\[
A_{j+1}-A_j
=
\begin{cases}
1,&\ell\mid j,\\
0,&\ell\nmid j,
\end{cases}
\]
and
\[
B_{j+1}-B_j
=
\begin{cases}
1,&k\mid j,\\
0,&k\nmid j.
\end{cases}
\]

We therefore obtain the complete monotonicity statement.

\begin{theorem}
\label{thm:monotonicity}
Let $k,\ell\in\mathbb N$ with $k\neq\ell$, and let
$M_{k,\ell}=[z^k-w^\ell]$.
Then
\[
\Sigma_0(M_{k,\ell})
>
\Sigma_1(M_{k,\ell})
\geq
\Sigma_2(M_{k,\ell})
\geq
\Sigma_3(M_{k,\ell})
\geq\cdots .
\]
More precisely, for every $j\geq1$,
\begin{equation}
\label{eq:equality-condition}
\Sigma_j(M_{k,\ell})
=
\Sigma_{j+1}(M_{k,\ell})
\iff
k\nmid j
\ \text{and}\
\ell\nmid j,
\end{equation}
whereas
\begin{equation}
\label{eq:strict-condition}
\Sigma_j(M_{k,\ell})
>
\Sigma_{j+1}(M_{k,\ell})
\iff
k\mid j
\ \text{or}\
\ell\mid j.
\end{equation}
Moreover,
\[
\lim_{j\to\infty}\Sigma_j(M_{k,\ell})=0.
\]
\end{theorem}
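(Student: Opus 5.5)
The proof combines the explicit formula \eqref{eq:Sigma-F}, the strict monotonicity of $F$ in Lemma~\ref{lem:F-monotone}, and the increment rules for $A_j,B_j$ recorded just before the statement.

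\emph{Step 1: the first inequality.} By Theorem~\ref{thm:sigma0} and the evaluation $\Sigma_1(M_{k,\ell})=F(1,1)=\pi^2/6-1$ obtained after Theorem~\ref{thm:complete-sigma}, we have
\[
\Sigma_0(M_{k,\ell})-\Sigma_1(M_{k,\ell})=1>0 .
\]
This is just Yang's identity, used here only as a consistency check.

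\emph{Step 2: comparing consecutive terms for $j\geq1$.} Fix $j\geq1$. Then $\Sigma_j=F(A_j,B_j)$ and $\Sigma_{j+1}=F(A_{j+1},B_{j+1})$. From the displayed increment formulas,
\[
A_{j+1}\geq A_j,\qquad B_{j+1}\geq B_j,
\]
with $A_{j+1}>A_j$ exactly when $\ell\mid j$, and $B_{j+1}>B_j$ exactly when $k\mid j$. There are two cases.
\begin{itemize}
\item[(a)] If $k\nmid j$ and $\ell\nmid j$, then $(A_{j+1},B_{j+1})=(A_j,B_j)$, so $\Sigma_{j+1}=\Sigma_j$.
\item[(b)] If $k\mid j$ or $\ell\mid j$, then at least one coordinate strictly increases. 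The general part of Lemma~\ref{lem:F-monotone} then gives $F(A_{j+1},B_{j+1})<F(A_j,B_j)$, that is, $\Sigma_{j+1}<\Sigma_j$.
\end{itemize}
The two cases are exhaustive and mutually exclusive. Together they yield both \eqref{eq:equality-condition} and \eqref{eq:strict-condition}, and hence the nonincreasing chain for $j\geq1$.

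\emph{Step 3: the limit.} As $j\to\infty$ we have $A_j\to\infty$ and $B_j\to\infty$. I would use the termwise bound
\[
\frac{(r+1)^2}{(r+\mu)(r+\mu+1)(r+\nu)(r+\nu+1)}\le\frac{1}{(r+\mu)(r+\mu+1)},
\]
which holds because $(r+1)^2\le(r+\nu)(r+\nu+1)$ when $\nu\geq1$. Summing the telescoping series gives $F(\mu,\nu)\le 1/\mu$. Hence
\[
0\le\Sigma_j(M_{k,\ell})\le 1/A_j\to0 .
\]
Alternatively, one could apply dominated convergence with the majorant $4/r^2$ from the proof of Theorem~\ref{thm:complete-sigma}.

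No step is a genuine obstacle. The only points needing care are that Lemma~\ref{lem:F-monotone} requires just one coordinate to increase strictly, and the elementary bound used in Step~3.
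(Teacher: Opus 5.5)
Your proposal is correct and follows the paper's proof essentially step for step: the same explicit values give $\Sigma_0>\Sigma_1$, and the same case split on the increments of $A_j,B_j$, combined with Lemma~\ref{lem:F-monotone}, gives the equality and strict-descent characterizations. The only deviation is in the limit. There your termwise bound $F(\mu,\nu)\le 1/\mu$ (via telescoping) replaces the paper's reduction to $F(s_j,s_j)\le\sum_{r\ge0}(r+s_j)^{-2}$ with $s_j=\min\{A_j,B_j\}$; both arguments are valid, and yours does not need monotonicity of $F$ at that step.
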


\begin{proof}
The inequality
$\Sigma_0>\Sigma_1$
follows from
\[
\Sigma_0=\frac{\pi^2}{6},
\qquad
\Sigma_1=\frac{\pi^2}{6}-1.
\]

Let $j\geq1$.  If neither $k$ nor $\ell$ divides $j$, then
\[
A_{j+1}=A_j,
\qquad
B_{j+1}=B_j.
\]
Hence \eqref{eq:Sigma-F} gives
$\Sigma_{j+1}=\Sigma_j$.

If $k\mid j$ or $\ell\mid j$, then at least one of
$A_j,B_j$ increases by one when $j$ is replaced by $j+1$, while the
other one does not decrease.  Lemma~\ref{lem:F-monotone} therefore
gives
$\Sigma_{j+1}<\Sigma_j$.
This proves \eqref{eq:equality-condition} and
\eqref{eq:strict-condition}.

It remains to prove convergence to zero.  Set
$s_j=\min\{A_j,B_j\}$. Then
\[
s_j\longrightarrow\infty
\qquad
(j\to\infty).
\]
Since
$A_j\geq s_j, 
B_j\geq s_j$,
Lemma~\ref{lem:F-monotone} and formula~\eqref{eq:Sigma-F} give
$0\leq
\Sigma_j(M_{k,\ell})
\leq
F(s_j,s_j)$.

Moreover,
\[
\begin{aligned}
F(s_j,s_j)
&=
\sum_{r=0}^{\infty}
\frac{(r+1)^2}
{(r+s_j)^2(r+s_j+1)^2}
\leq
\sum_{r=0}^{\infty}
\frac{1}{(r+s_j)^2}.
\end{aligned}
\]
The last tail tends to zero as $j\to\infty$, because
$s_j\to\infty$.  Hence
$\Sigma_j(M_{k,\ell})\longrightarrow0$.
\end{proof}

Theorem~\ref{thm:monotonicity} shows that the set of strict descent
points is
\[
\bigl\{
j\geq1:
\Sigma_j(M_{k,\ell})>\Sigma_{j+1}(M_{k,\ell})
\bigr\}
=
k\mathbb N\cup\ell\mathbb N.\]



Thus the constant blocks of the sequence are determined exactly by the
increasing enumeration of the set
$k\mathbb N\cup\ell\mathbb N$. More precisely, let
$0=t_0<t_1<t_2<\cdots$
be the increasing enumeration of
$\{0\}\cup k\mathbb N\cup\ell\mathbb N$.
Then
\[
\Sigma_{t_{r-1}+1}
=
\Sigma_{t_{r-1}+2}
=
\cdots
=
\Sigma_{t_r}
>
\Sigma_{t_r+1},
\qquad r\geq1.
\]
In particular, since
$t_1=\min\{k,\ell\}$,
the first block is
\[
\Sigma_1
=
\Sigma_2
=
\cdots
=
\Sigma_{\min\{k,\ell\}}
=
\frac{\pi^2}{6}-1,
\]
followed by a strict decrease.

\begin{remark}
If $k=\ell=K$, then
\[
A_j=B_j=\left\lceil\frac{j}{K}\right\rceil.
\]
Together with $F(s,s)=\Sigma_s([z-w])$, formula~\eqref{eq:Sigma-F} reduces to
\[
\Sigma_j([z^K-w^K])
=
\Sigma_{\lceil j/K\rceil}([z-w]),
\qquad j\geq1,
\]
recovering the block-repetition formula of~\cite{LiuLuZu2026block}.
\end{remark}



\section{Arithmetic consequences and parameter recovery}
\label{sec4}

Throughout this section, let
\[
M_{k,\ell}=[z^k-w^\ell]
\subset H^2(\mathbb D^2),
\qquad
k,\ell\in\mathbb N,\quad k\neq\ell.
\]

Section~\ref{sec3} determines the complete numerical invariant sequence of
$M_{k,\ell}=[z^k-w^\ell]$
through formula~\eqref{eq:Sigma-F}, and Theorem~
\ref{thm:monotonicity} identifies its strict descent points exactly.
We turn to the arithmetic information encoded by the descent set and
the block values.  These data will recover the semigroup
$\langle k,\ell\rangle$ and, ultimately, the unordered pair
$\{k,\ell\}$.

\subsection{The descent set and semigroup recovery}


\begin{definition}
For $M_{k,\ell}=[z^k-w^\ell]$, define its descent set by
\[
\mathcal D_{k,\ell}
=
\{j\in\mathbb N:
\Sigma_j(M_{k,\ell})>\Sigma_{j+1}(M_{k,\ell})\}.
\]
\end{definition}

By Theorem~\ref{thm:monotonicity},
\begin{equation}
\label{eq:recovery-descent}
\mathcal D_{k,\ell}
=
k\mathbb N\cup\ell\mathbb N. 
\end{equation}
Thus the descent set records precisely the union of the positive
multiples of the two exponents.

Set $a=\min\{k,\ell\}$ and $b=\max\{k,\ell\}$.
Then $a<b$ because $k\neq\ell$.

\begin{lemma}
\label{lem:recover-min}
The smaller exponent is determined by the descent set through
\begin{equation}
\label{eq:recover-min}
a
=
\min\mathcal D_{k,\ell}
=
\min
\left\{
j\geq1:
\Sigma_j>\Sigma_{j+1}
\right\}.
\end{equation}
\end{lemma}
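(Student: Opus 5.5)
The lemma follows directly from the identification \eqref{eq:recovery-descent} of the descent set, which was obtained from Theorem~\ref{thm:monotonicity}. By that identity,
\[
\mathcal D_{k,\ell}=k\mathbb N\cup\ell\mathbb N=a\mathbb N\cup b\mathbb N,
\]
since the union is symmetric in the two exponents. We therefore only need to compute the minimum of this set of positive integers.

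First, $a=a\cdot 1\in a\mathbb N\subset\mathcal D_{k,\ell}$, so the set is nonempty and $\min\mathcal D_{k,\ell}\leq a$.

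For the reverse inequality, take any $j\in\mathcal D_{k,\ell}$. Then $j=ar$ or $j=bs$ for some $r,s\in\mathbb N$. In the first case $j\geq a$ because $r\geq1$. In the second case $j\geq b>a$. Hence every element of $\mathcal D_{k,\ell}$ is at least $a$, and $\min\mathcal D_{k,\ell}=a$.

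The second expression in \eqref{eq:recover-min} is just the definition of $\mathcal D_{k,\ell}$ written out via the strict-descent criterion \eqref{eq:strict-condition}.

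No real obstacle arises here, because all the analytic work is contained in Theorem~\ref{thm:monotonicity}. One should only check that the descent set is indexed over $j\geq1$, so that the strict drop $\Sigma_0>\Sigma_1$ is excluded and does not produce a spurious minimum at $j=0$. This matches the convention in the definition of $\mathcal D_{k,\ell}$. The result is also consistent with the first-block description following Theorem~\ref{thm:monotonicity}, which states $\Sigma_1=\cdots=\Sigma_{\min\{k,\ell\}}>\Sigma_{\min\{k,\ell\}+1}$.
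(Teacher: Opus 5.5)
Your proof is correct and follows the paper's argument: you rewrite $\mathcal D_{k,\ell}=k\mathbb N\cup\ell\mathbb N$ as $a\mathbb N\cup b\mathbb N$ and note that its least element is $a$ because $a<b$. One small point: the second equality in the display is just the definition of $\mathcal D_{k,\ell}$, so you do not need to invoke the strict-descent criterion there.
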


\begin{proof}
By \eqref{eq:recovery-descent},
\[
\mathcal D_{k,\ell}
=
a\mathbb N\cup b\mathbb N.
\]
Since $a<b$, the smallest positive element in
$a\mathbb N\cup b\mathbb N$ is $a$.  This proves
\eqref{eq:recover-min}.
\end{proof}


\begin{proposition}
\label{prop:recover-nondivisible}
Suppose that
$a\nmid b$.
Then
\begin{equation}
\label{eq:recover-b-nondivisible}
b
=
\min
\left(
\mathcal D_{k,\ell}\setminus a\mathbb N
\right).
\end{equation}
Consequently, if neither $k$ divides $\ell$ nor $\ell$ divides $k$,
then the descent set $\mathcal D_{k,\ell}$ uniquely determines the
unordered pair $\{k,\ell\}$.
\end{proposition}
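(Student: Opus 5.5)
The plan is to read everything off the identity $\mathcal D_{k,\ell}=a\mathbb N\cup b\mathbb N$ in \eqref{eq:recovery-descent}. The argument is elementary set arithmetic, carried out in two steps.

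First, I will show that $b\in\mathcal D_{k,\ell}\setminus a\mathbb N$. Clearly $b\in b\mathbb N\subset\mathcal D_{k,\ell}$. The hypothesis $a\nmid b$ says exactly that $b\notin a\mathbb N$. Hence $b$ lies in the set, which is therefore nonempty, and its minimum exists and is at most $b$.

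Second, I will show that every element of $\mathcal D_{k,\ell}\setminus a\mathbb N$ is at least $b$. Take $j$ in this set. It lies in $a\mathbb N\cup b\mathbb N$ but not in $a\mathbb N$, so $j\in b\mathbb N$. Writing $j=bt$ with $t\geq1$ gives $j\geq b$. Combined with the first step, this proves \eqref{eq:recover-b-nondivisible}.

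For the consequence, note that the condition that neither of $k,\ell$ divides the other is equivalent to $a\nmid b$. Indeed, $b\mid a$ is impossible anyway since $a<b$. Under this condition, Lemma~\ref{lem:recover-min} recovers $a=\min\mathcal D_{k,\ell}$ from the descent set alone. Once $a$ is known, the set $\mathcal D_{k,\ell}\setminus a\mathbb N$ is determined by $\mathcal D_{k,\ell}$, and its minimum is $b$. So $\{k,\ell\}=\{a,b\}$ is determined. Equivalently, if two such pairs have equal descent sets, applying both formulas shows the pairs coincide as unordered pairs.

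There is no real obstacle here. The only point needing care is the translation of the hypothesis "neither divides the other" into $a\nmid b$, together with checking that the minimum in \eqref{eq:recover-b-nondivisible} is taken over a nonempty set. Both are immediate.
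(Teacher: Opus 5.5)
Your proposal is correct and follows essentially the same argument as the paper. You show that $b$ lies in $\mathcal D_{k,\ell}\setminus a\mathbb N$, that every element of that set is a positive multiple of $b$, and then combine this with Lemma~\ref{lem:recover-min}. Your explicit remark that ``neither divides the other'' reduces to $a\nmid b$ (because $b\mid a$ is impossible when $a<b$) is a small point the paper leaves implicit.
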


\begin{proof}
Since $b\in b\mathbb N\subset\mathcal D_{k,\ell}$ and $a\nmid b$,
we have
$b\in
\mathcal D_{k,\ell}\setminus a\mathbb N$.
Thus this set is nonempty.

Now let
$n\in
\mathcal D_{k,\ell}\setminus a\mathbb N$.
By \eqref{eq:recovery-descent}, we obtain
$n\in a\mathbb N\cup b\mathbb N$.
Since $n\notin a\mathbb N$, necessarily
$n\in b\mathbb N$.
Hence
$n=rb$
for some $r\in\mathbb N$, and therefore
$n\geq b$.
Since $b$ itself belongs to the set, it is its smallest element.
This proves \eqref{eq:recover-b-nondivisible}.

Now suppose that neither $k$ divides $\ell$ nor $\ell$ divides $k$.
By Lemma~\ref{lem:recover-min}, we have 
\[
a=\min\{k,\ell\}
=
\min \mathcal D_{k,\ell}.
\]

Moreover, \eqref{eq:recover-b-nondivisible} gives
$b=
\min
\left(
\mathcal D_{k,\ell}\setminus a\mathbb N
\right)$,
so the descent set also uniquely determines the larger exponent $b$.
Consequently,
$\{k,\ell\}=\{a,b\}$
is uniquely determined by $\mathcal D_{k,\ell}$.
\end{proof}

\begin{remark}
\label{rem:coprime-recovery}
In particular, if
$\gcd(k,\ell)=1
\quad\text{and}\quad
k,\ell>1$,
then neither exponent divides the other.  Hence in this case the
unordered pair $\{k,\ell\}$ is determined entirely by the descent
pattern of the numerical invariant sequence; the actual values of the
$\Sigma_j$ are not needed for parameter recovery.
\end{remark}

For $k,\ell\in\mathbb N$, write
\[
\langle k,\ell\rangle
=
\{rk+s\ell:r,s\in\mathbb N_0\}
\]
for the additive subsemigroup of $\mathbb N_0$ generated by $k$ and
$\ell$. When $\gcd(k,\ell)=1$, this is a numerical semigroup in the
usual sense.

\begin{theorem}
\label{thm:recover-semigroup}
The descent set
\[
\mathcal D_{k,\ell}
=
\{j\geq1:\Sigma_j>\Sigma_{j+1}\}
\]
uniquely determines the additive semigroup
$\langle k,\ell\rangle$.

More precisely, let
$a=\min\mathcal D_{k,\ell}$.
Then the following two mutually exclusive cases occur.

\medskip
\noindent
{\rm (i)}
If
$\mathcal D_{k,\ell}\setminus a\mathbb N\neq\varnothing$,
then, with
$b=
\min
\left(
\mathcal D_{k,\ell}\setminus a\mathbb N
\right)$,
we have $\langle k,\ell\rangle=
\langle a,b\rangle$.

\medskip
\noindent
{\rm (ii)}
If $\mathcal D_{k,\ell}=a\mathbb N$,
then
$\langle k,\ell\rangle=
a\mathbb N_0$.
\end{theorem}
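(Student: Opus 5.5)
The argument starts from the description of the descent set in \eqref{eq:recovery-descent}, namely $\mathcal D_{k,\ell}=a\mathbb N\cup b\mathbb N$ with $a=\min\{k,\ell\}<b=\max\{k,\ell\}$. By Lemma~\ref{lem:recover-min}, $a=\min\mathcal D_{k,\ell}$ is read off from the descent set. Since $\langle k,\ell\rangle=\langle a,b\rangle$ trivially (the semigroup depends only on the unordered pair), it suffices to show that $\langle a,b\rangle$ is expressible in terms of $\mathcal D_{k,\ell}$. We split into the cases $a\nmid b$ and $a\mid b$, and show that these coincide exactly with cases (i) and (ii) of the statement. This gives both the mutual exclusivity and the uniqueness claim, because the dichotomy is phrased entirely through $\mathcal D_{k,\ell}$.

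Suppose first that $a\nmid b$. Then $b\in\mathcal D_{k,\ell}\setminus a\mathbb N$, so this set is nonempty. By Proposition~\ref{prop:recover-nondivisible} its minimum equals the larger exponent $b$. Hence the numbers $a$ and $b$ defined in case (i) are exactly $\min\{k,\ell\}$ and $\max\{k,\ell\}$, and therefore $\langle k,\ell\rangle=\langle a,b\rangle$.

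Suppose next that $a\mid b$. Then $b\mathbb N\subset a\mathbb N$, so $\mathcal D_{k,\ell}=a\mathbb N$ and $\mathcal D_{k,\ell}\setminus a\mathbb N=\varnothing$. We are therefore in case (ii). Moreover $\langle a,b\rangle=\{ra+sb\}$ consists of multiples of $a$ and contains every multiple $ra$, so it equals $a\mathbb N_0$.

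Since the two cases $a\nmid b$ and $a\mid b$ exhaust all possibilities and lead to the two mutually exclusive conditions on $\mathcal D_{k,\ell}$, the converse implications also hold. If $\mathcal D_{k,\ell}\setminus a\mathbb N\neq\varnothing$, we must have $a\nmid b$. If $\mathcal D_{k,\ell}=a\mathbb N$, we must have $a\mid b$.

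There is no real obstacle here. The only point needing care is that in case (ii) the value of $b$ is genuinely lost, because only its divisibility by $a$ is visible. The semigroup nevertheless stays determined, since $b\in a\mathbb N$ adds nothing to $a\mathbb N_0$.
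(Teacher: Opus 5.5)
Your proposal is correct and follows essentially the same route as the paper. You identify $a=\min\{k,\ell\}$ via Lemma~\ref{lem:recover-min}, recover $b$ from Proposition~\ref{prop:recover-nondivisible} when $a\nmid b$, and note that $\langle a,qa\rangle=a\mathbb N_0$ when $a\mid b$. The only difference is bookkeeping: you split on divisibility and deduce which descent-set condition holds, while the paper starts from the descent-set conditions and derives the divisibility.
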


\begin{proof}
Let
\[
a=\min\{k,\ell\},
\qquad
b=\max\{k,\ell\}.
\]
By Lemma~\ref{lem:recover-min},
$a=\min\mathcal D_{k,\ell}$.

{\rm (i)} Assume first that
$\mathcal D_{k,\ell}\setminus a\mathbb N
\neq\varnothing$.
Then $a\nmid b$.  Indeed, if $a\mid b$, then
$b\mathbb N\subset a\mathbb N$ and hence
$\mathcal D_{k,\ell}=
a\mathbb N\cup b\mathbb N=
a\mathbb N$,
contrary to the assumption.  Proposition~\ref{prop:recover-nondivisible}
therefore gives
\[
b
=
\min
\left(
\mathcal D_{k,\ell}\setminus a\mathbb N
\right).
\]
Thus the descent set recovers both $a$ and $b$. Since
$\{k,\ell\}=\{a,b\}$,
and the additive semigroup generated by two integers is independent
of the order of the generators, it follows that
$\langle k,\ell\rangle=\langle a,b\rangle$.

{\rm (ii)} Now suppose that
$\mathcal D_{k,\ell}=a\mathbb N$.
Since $b\in\mathcal D_{k,\ell}$, we must have
$b\in a\mathbb N$.
Hence
$b=qa$
for some integer $q\geq2$.  Therefore
\[
\begin{aligned}
\langle k,\ell\rangle
&=
\langle a,qa\rangle=
\left\{
ra+sqa:
r,s\in\mathbb N_0
\right\}=
\left\{
(r+sq)a:
r,s\in\mathbb N_0
\right\}.
\end{aligned}
\]
Every element of this semigroup is a nonnegative multiple of $a$,
so
$\langle a,qa\rangle\subset a\mathbb N_0$.
Conversely,
\[
na=n\cdot a+0\cdot qa
\in\langle a,qa\rangle
\qquad(n\in\mathbb N_0),
\]
and hence
$\langle a,qa\rangle=a\mathbb N_0$.
This proves the second assertion.
\end{proof}

\begin{remark}
\label{rem:semigroup-vs-generators}
The second case in Theorem~\ref{thm:recover-semigroup} explains why
recovery of the semigroup is strictly easier than recovery of the
particular pair of exponents.  For example,
\[
\langle2,4\rangle
=
\langle2,6\rangle
=
2\mathbb N_0,
\]
and both pairs have the same descent set
$2\mathbb N$.
Thus the descent pattern cannot distinguish the nonminimal generators
$4$ and $6$, although the complete numerical values will distinguish
them.
\end{remark}

\subsection{Recovery of the unordered parameter pair}

It remains to treat the divisibility case, in which the descent set
does not determine the larger exponent.


Assume that $a\mid b$ and write
\[
b=qa,
\qquad
q\in\mathbb N,\quad q\geq2.
\] Since $F$ is symmetric, there is no loss of generality for the
calculation below in assigning the two exponents as
\[
k=a,
\qquad
\ell=qa.
\]
The resulting numerical invariant sequence depends only on the
unordered pair $\{a,qa\}$.

Since
$\mathcal D_{a,qa}=
a\mathbb N$,
the sequence is constant on consecutive blocks of length $a$.  For
$r\geq1$, define the $r$-th block value by
\begin{equation}
\label{eq:Vr-def}
V_r
=
\Sigma_{(r-1)a+1}
=
\Sigma_{(r-1)a+2}
=
\cdots
=
\Sigma_{ra}.
\end{equation}

To express the block values in terms of the function $F$, we only need
the following elementary relation between the block index and the
corresponding ceiling functions.

\begin{lemma}
\label{lem:block-ceiling}
Let $q,a,r\in\mathbb N$, and suppose
$(r-1)a<j\leq ra$.
Then
\begin{equation*}
\label{eq:block-ceiling-1}
\left\lceil\frac{j}{a}\right\rceil=r
\end{equation*}
and
\begin{equation*}
\label{eq:block-ceiling-2}
\left\lceil\frac{j}{qa}\right\rceil
=
\left\lceil\frac{r}{q}\right\rceil.
\end{equation*}
\end{lemma}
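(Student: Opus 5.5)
The first identity comes straight from the definition of the ceiling. From $(r-1)a<j\leq ra$, dividing by $a>0$ gives $r-1<j/a\leq r$. Since $r$ is an integer, this says exactly that $\lceil j/a\rceil=r$, because $\lceil x\rceil$ is the unique integer $n$ with $n-1<x\leq n$.

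For the second identity I would use the nested-ceiling rule $\lceil \lceil x\rceil/q\rceil=\lceil x/q\rceil$, valid for real $x$ and $q\in\mathbb N$. Applied with $x=j/a$, it gives
\[
\left\lceil\frac{j}{qa}\right\rceil=\left\lceil\frac{\lceil j/a\rceil}{q}\right\rceil=\left\lceil\frac{r}{q}\right\rceil .
\]

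To keep the argument self-contained, I would verify this directly rather than quote the rule. Set $s=\lceil r/q\rceil$, so that $(s-1)q<r\leq sq$. Since $r$ and $(s-1)q$ are integers, this is equivalent to $(s-1)q\leq r-1$ together with $r\leq sq$.

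We must show $(s-1)qa<j\leq sqa$.

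\begin{itemize}
\item Upper bound: $j\leq ra\leq sqa$.
\item Lower bound: $j>(r-1)a\geq (s-1)qa$.
\end{itemize}

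Together these give $s-1<j/(qa)\leq s$, hence $\lceil j/(qa)\rceil=s$.

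The only delicate point is the lower bound. It relies on integrality to upgrade the strict inequality $(s-1)q<r$ to $(s-1)q\leq r-1$; everything else is routine.
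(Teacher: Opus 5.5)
Your proposal is correct and follows essentially the same route as the paper: set $c=\lceil r/q\rceil$, so $(c-1)q<r\leq cq$, and sandwich $j$ as $(c-1)qa<j\leq ra\leq cqa$. You are slightly more careful than the paper, which writes the lower bound $(c-1)qa<j$ without comment, whereas you make explicit that it needs the integrality step $(c-1)q\leq r-1$ together with $j>(r-1)a$.
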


\begin{proof}
The first identity follows immediately from
$r-1<\frac{j}{a}\leq r$.

For the second, let
$c=\left\lceil\frac{r}{q}\right\rceil$.
Then $(c-1)q<r\leq cq$, and hence
\[
(c-1)qa<j\leq ra \leq cqa.
\]
Therefore
\[
\left\lceil\frac{j}{qa}\right\rceil=c
=
\left\lceil\frac{r}{q}\right\rceil.
\]
\end{proof}


Combining Lemma~\ref{lem:block-ceiling} with
\eqref{eq:Sigma-F}, we see that both ceiling parameters are
constant throughout the $r$-th block.  Indeed, for
$(r-1)a<j\leq ra$, we have
\[
\left\lceil\frac{j}{a}\right\rceil=r,
\qquad
\left\lceil\frac{j}{qa}\right\rceil
=
\left\lceil\frac{r}{q}\right\rceil.
\]
Hence the common value of the $r$-th block is
\begin{equation}
\label{eq:Vr-formula}
V_r
=
F\left(
r,
\left\lceil\frac{r}{q}\right\rceil
\right),
\qquad r\geq1.
\end{equation}

This formula makes the dependence of the block values on the integer
$q$ explicit and will be used below to recover the larger exponent.

Set
\[
G_r:=F(r,1),
\qquad r\geq1.
\]
By Theorem~\ref{thm:F-closed},
\[
G_r=
\begin{cases}
\dfrac{\pi^2}{6}-1, & r=1,\\[1ex]
2-\dfrac{\pi^2}{6}, & r=2,\\[1ex]
\dfrac{r-1-H_{r-1}}{(r-1)(r-2)}, & r\geq3.
\end{cases}
\]



\begin{theorem}
\label{thm:recover-q}
Suppose
\[
\{k,\ell\}=\{a,qa\},
\qquad q\geq2,
\]
and let $V_r$ be the constant block values defined in
\eqref{eq:Vr-def}.  Then
\begin{equation}
\label{eq:q-recovery}
V_r=G_r
\quad\Longleftrightarrow\quad
1\leq r\leq q.
\end{equation}
Consequently,
\begin{equation*}
\label{eq:q-max-recovery}
q
=
\max
\left\{
r\geq1:
V_r=G_r
\right\}.
\end{equation*}
Equivalently,
\begin{equation}
\label{eq:q-first-deviation}
q+1
=
\min
\left\{
r\geq3:
V_r<G_r
\right\}.
\end{equation}
Thus the complete numerical invariant sequence uniquely determines
the larger exponent
$b=qa$.
\end{theorem}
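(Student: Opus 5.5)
The argument runs through formula \eqref{eq:Vr-formula}, $V_r=F(r,\lceil r/q\rceil)$, together with the strict monotonicity of $F$ from Lemma~\ref{lem:F-monotone} and the symmetry $F(\mu,\nu)=F(\nu,\mu)$. We split according to whether $r\le q$ or $r\ge q+1$.

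First, suppose $1\le r\le q$. Then $0<r/q\le1$, so $\lceil r/q\rceil=1$. Hence
\[
V_r=F(r,1)=G_r
\]
by the definition of $G_r$. This proves the implication ``$\Leftarrow$'' in \eqref{eq:q-recovery}.

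Next, suppose $r\ge q+1$. Then $r/q>1$, so $\lceil r/q\rceil\ge2$. Setting $\nu=\lceil r/q\rceil\ge2>1$, the second inequality of Lemma~\ref{lem:F-monotone}, applied with $\mu=r$ fixed and the second argument increased from $1$ to $\nu$, gives
\[
V_r=F(r,\nu)<F(r,1)=G_r .
\]
Thus $V_r\neq G_r$, which gives ``$\Rightarrow$''. Only the strict inequality from Lemma~\ref{lem:F-monotone} is needed here; the closed forms of Theorem~\ref{thm:F-closed} play no role in this step.

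From \eqref{eq:q-recovery} the maximum formula follows at once. The equality set is exactly $\{1,\dots,q\}$, which is nonempty and bounded, so its maximum is $q$. For \eqref{eq:q-first-deviation}, the argument above shows $V_r<G_r$ precisely when $r\ge q+1$, and $V_r=G_r$ otherwise. Since $q\ge2$, we have $q+1\ge3$, so restricting to $r\ge3$ does not affect the minimum, and the minimum is $q+1$.

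It remains to check that all the quantities used are determined by the invariant sequence. The value $a$ is read off from the descent set by Lemma~\ref{lem:recover-min}. Once $a$ is known, each block value $V_r=\Sigma_{ra}$ is determined by the sequence. The numbers $G_r=F(r,1)$ are universal constants that do not depend on $k$ or $\ell$. Hence $q$, and therefore $b=qa$, is recovered from the sequence.

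There is no serious obstacle. The only point needing care is that the symmetric relabeling $k=a$, $\ell=qa$ is harmless: by symmetry of $F$, the sequence depends only on the unordered pair $\{a,qa\}$, so formula \eqref{eq:Vr-formula} holds whichever exponent is $a$.
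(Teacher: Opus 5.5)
Your proposal is correct and follows the paper's proof essentially step for step. Both arguments use $V_r=F(r,\lceil r/q\rceil)$, the fact that $\lceil r/q\rceil=1$ exactly when $1\le r\le q$, and the strict monotonicity of $F$ in its second variable (Lemma~\ref{lem:F-monotone}) for $r>q$; both then read off $a$ from the first descent to recover $b=qa$. Your remark that $q\ge2$ forces $q+1\ge3$, so that restricting to $r\ge3$ in \eqref{eq:q-first-deviation} is harmless, makes explicit a small point the paper leaves implicit.
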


\begin{proof}
By \eqref{eq:Vr-formula},
\[
V_r
=
F\left(
r,
\left\lceil\frac{r}{q}\right\rceil
\right).
\]

If $1\leq r\leq q$, then
$0<\frac rq\leq1$,
and hence
\[
\left\lceil\frac rq\right\rceil=1.
\]
Therefore
$V_r=F(r,1)=G_r$.

Conversely, if $r>q$, then
$\frac rq>1$,
so
\[
\left\lceil\frac rq\right\rceil\geq2.
\]
By Lemma~\ref{lem:F-monotone}, $F$ is strictly decreasing in its
second variable.  Thus
\[
F\left(
r,
\left\lceil\frac rq\right\rceil
\right)
<
F(r,1),
\]
and consequently
$V_r<G_r$.

This proves \eqref{eq:q-recovery},  and consequently
\[
q=\max\{r\geq1:V_r=G_r\},
\]
while the equivalent formula \eqref{eq:q-first-deviation} follows immediately.

Since $a$ has already been recovered from the first descent by
Lemma~\ref{lem:recover-min}, the identity
$b=qa$
shows that the larger exponent is also uniquely recovered.
\end{proof}


\begin{theorem}
\label{thm:parameter-recovery}
Let
\[
k,\ell,k',\ell'\in\mathbb N,
\qquad
k\neq\ell,
\qquad
k'\neq\ell',
\]
and set
\[
M_{k,\ell}=[z^k-w^\ell],
\qquad
M_{k',\ell'}=[z^{k'}-w^{\ell'}].
\]
If
\begin{equation}
\label{eq:same-sequence}
\Sigma_j(M_{k,\ell})
=
\Sigma_j(M_{k',\ell'})
\qquad
\text{for every }j\geq0,
\end{equation}
then $\{k,\ell\}=
\{k',\ell'\}.$
Thus the complete numerical invariant sequence uniquely determines
the unordered pair of exponents.
\end{theorem}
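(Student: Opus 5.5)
The plan is to recover each of $a=\min\{k,\ell\}$ and $b=\max\{k,\ell\}$ from the sequence. Let $a'=\min\{k',\ell'\}$ and $b'=\max\{k',\ell'\}$. Hypothesis \eqref{eq:same-sequence} says the two sequences coincide term by term, so their descent sets coincide: $\mathcal D_{k,\ell}=\mathcal D_{k',\ell'}$. Lemma~\ref{lem:recover-min} expresses the smaller exponent as $\min\mathcal D$, so $a=a'$ at once. It remains to show $b=b'$, and I will split according to whether the common descent set equals $a\mathbb N$.

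\emph{Case 1: $\mathcal D\setminus a\mathbb N\neq\varnothing$.} Here $a\nmid b$, since otherwise $\mathcal D=a\mathbb N\cup b\mathbb N=a\mathbb N$. The same reasoning applied to the primed data gives $a\nmid b'$. Proposition~\ref{prop:recover-nondivisible} then yields
\[
b=\min(\mathcal D\setminus a\mathbb N)=b'.
\]
Equivalently, this is the first case of Theorem~\ref{thm:recover-semigroup}.

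\emph{Case 2: $\mathcal D=a\mathbb N$.} Then $b\in\mathcal D$ forces $a\mid b$, and likewise $a\mid b'$. Since $b,b'>a$, we may write $b=qa$ and $b'=q'a$ with integers $q,q'\geq2$. The symmetry $F(\mu,\nu)=F(\nu,\mu)$, together with \eqref{eq:Sigma-F}, shows that each sequence depends only on its unordered pair. So I may apply Theorem~\ref{thm:recover-q} to both $\{a,qa\}$ and $\{a,q'a\}$.

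The block values $V_r$ in \eqref{eq:Vr-def} are defined purely from the sequence $\Sigma_j$ and the common block length $a$, so they are the same for both submodules. The comparison values $G_r=F(r,1)$ do not depend on the parameters at all. Theorem~\ref{thm:recover-q} then gives
\[
q=\max\{r\geq1:V_r=G_r\}=q',
\]
hence $b=qa=q'a=b'$.

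I do not expect a real obstacle; the substantive work is already done in Theorem~\ref{thm:recover-q}. The step needing care is the legitimacy of the normalization $k=a$, $\ell=qa$ used there. If instead $(k,\ell)=(qa,a)$, the formula \eqref{eq:Sigma-F} returns $F(A_j,B_j)$ with its arguments swapped, and symmetry of $F$ makes the block values identical. I will state this explicitly. In both cases $\{k,\ell\}=\{a,b\}=\{a',b'\}=\{k',\ell'\}$.
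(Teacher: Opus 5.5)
Your proposal is correct and follows the paper's proof essentially step for step: you recover $a=\min\mathcal D$, then split on whether $\mathcal D=a\mathbb N$, using Proposition~\ref{prop:recover-nondivisible} in the first case and the block-value criterion $q=\max\{r\geq1:V_r=G_r\}$ of Theorem~\ref{thm:recover-q} in the second. Your explicit checks that $a\nmid b'$ in Case 1, and that the symmetry of $F$ justifies the normalization $k=a$, $\ell=qa$, fill in small points the paper leaves implicit.
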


\begin{proof}
Assume \eqref{eq:same-sequence}.  Since the two sequences agree
term by term, their descent sets agree:
$\mathcal D_{k,\ell}=
\mathcal D_{k',\ell'}$.
Denote the common descent set by $\mathcal D$, and put
$a=\min\mathcal D$.
By Lemma~\ref{lem:recover-min},
\[
a
=
\min\{k,\ell\}
=
\min\{k',\ell'\}.
\]

We distinguish two cases.

\medskip
\noindent
{\it Case 1.}
Suppose
$\mathcal D\setminus a\mathbb N\neq\varnothing$.
Then Proposition~\ref{prop:recover-nondivisible} gives
\[
\max\{k,\ell\}
=
\min
\left(
\mathcal D\setminus a\mathbb N
\right)
=
\max\{k',\ell'\}.
\]
Together with the equality of the smaller exponents, this gives
$\{k,\ell\}=\{k',\ell'\}$.

\medskip
\noindent
{\it Case 2.}
Suppose
$\mathcal D=a\mathbb N$.
Then the larger exponent in each pair is a positive multiple of $a$.
Since the two exponents in each pair are distinct, we may write
\[
\{k,\ell\}
=
\{a,qa\},
\qquad
\{k',\ell'\}
=
\{a,q'a\},
\]
for some integers
$q,q'\geq2$.

Because the complete numerical invariant sequences agree, their
constant block values $V_r$ agree for every $r\geq1$.  Applying
Theorem~\ref{thm:recover-q} to this common sequence gives
\[
q
=
\max\{r:V_r=G_r\}
=
q'.
\]
Hence $qa=q'a$, and therefore
\[
\{k,\ell\}
=
\{a,qa\}
=
\{a,q'a\}
=
\{k',\ell'\}.
\]
This completes the proof.
\end{proof}

\subsection{Coordinate-swap symmetry and the ordered pair}

Although the unordered pair $\{k,\ell\}$ is completely determined by
the invariant sequence, the order of the two exponents is invisible.

\begin{theorem}
\label{thm:ordered-not-recovered}
For all $k,\ell\in\mathbb N$,
\begin{equation}
\label{eq:symmetry-sequence}
\Sigma_j([z^k-w^\ell])
=
\Sigma_j([z^\ell-w^k]),
\qquad j\geq0.
\end{equation}
Consequently, when $k\neq\ell$, the complete numerical invariant
sequence does not determine the ordered pair $(k,\ell)$.
\end{theorem}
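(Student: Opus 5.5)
The cleanest route is to realize the coordinate swap as a unitary equivalence of submodules, rather than to compare formulas. Define the flip operator $V$ on $H^2(\mathbb D^2)$ by $(Vf)(z,w)=f(w,z)$. It permutes the monomial orthonormal basis ($z^aw^b\mapsto z^bw^a$), so it is a unitary involution. It is also continuous and multiplicative on polynomials, so it maps $\mathbb C[z,w](z^k-w^\ell)$ onto $\mathbb C[z,w](w^k-z^\ell)$. Taking closures gives $V[z^k-w^\ell]=[w^k-z^\ell]=[z^\ell-w^k]$, since multiplying the generator by $-1$ does not change the principal submodule. Write $M=[z^k-w^\ell]$ and $M'=VM=[z^\ell-w^k]$.

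Next I would track how $V$ intertwines the module actions: $VT_z=T_wV$ and $VT_w=T_zV$. Hence $V(zM)=wM'$ and $V(wM)=zM'$, and so
\[
V\mathcal E_z(M)=\mathcal E_w(M'),\qquad V\mathcal E_w(M)=\mathcal E_z(M').
\]
Let $\{\phi_n\}$ and $\{\psi_m\}$ be orthonormal bases of $\mathcal E_z(M)$ and $\mathcal E_w(M)$. Then $\{V\psi_m\}$ is an orthonormal basis of $\mathcal E_z(M')$ and $\{V\phi_n\}$ is an orthonormal basis of $\mathcal E_w(M')$. By \eqref{eq:general-double-sum}, which is independent of the choice of bases, we get
\[
\Sigma_j(M')=\sum_{m,n}\bigl|\langle w^jV\psi_m,z^jV\phi_n\rangle\bigr|^2
=\sum_{m,n}\bigl|\langle V(z^j\psi_m),V(w^j\phi_n)\rangle\bigr|^2
=\sum_{m,n}\bigl|\langle z^j\psi_m,w^j\phi_n\rangle\bigr|^2 .
\]
The last sum equals $\Sigma_j(M)$ after conjugating each inner product and swapping the order of summation, which is legitimate because all terms are nonnegative. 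This argument covers every $j\ge0$, including $j=0$, and it also holds when $k=\ell$, where the statement is trivial.

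For $k\neq\ell$ there is a consistency check. The formula $\Sigma_j=F(\lceil j/\ell\rceil,\lceil j/k\rceil)$ for $j\ge1$ is invariant under $k\leftrightarrow\ell$ because $F$ is symmetric, and $\Sigma_0=\pi^2/6$ does not depend on the exponents. This gives \eqref{eq:symmetry-sequence} a second time, but I would rely on the unitary argument as the proof.

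For the consequence, take $k\neq\ell$. The ordered pairs $(k,\ell)$ and $(\ell,k)$ are then distinct, yet by \eqref{eq:symmetry-sequence} they produce identical sequences, so no function of the sequence can recover the order. There is no real obstacle here. The only step requiring care is the bookkeeping that the flip exchanges the roles of $\mathcal E_z$ and $\mathcal E_w$, and correspondingly of $z^j$ and $w^j$, in the double sum.
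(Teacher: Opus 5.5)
Your proof is correct, and it takes a different route from the paper's.

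\textbf{The paper's route.} The paper reads the identity off the explicit formulas it has already computed. It uses $\Sigma_0=\pi^2/6$ for both submodules, and for $j\ge1$ it uses $\Sigma_j([z^k-w^\ell])=F(\lceil j/\ell\rceil,\lceil j/k\rceil)$ together with the symmetry of $F$. The flip $U$ and the relations $UT_z=T_wU$, $UT_w=T_zU$ appear only as an ``operator-theoretic explanation'' described as consistent with the identity. The paper never checks that the double sum defining $\Sigma_j$ is invariant under exchanging the coordinate order. It even cautions that $U$ is not a unitary equivalence of modules with the coordinate actions kept fixed.

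\textbf{What your argument adds.} You supply exactly that missing check. $V$ carries $\mathcal E_z(M)$ onto $\mathcal E_w(VM)$, carries $\mathcal E_w(M)$ onto $\mathcal E_z(VM)$, and turns $z^j$ into $w^j$. Hence, using the transported bases, the defining sum for $VM$ is termwise the complex conjugate of the one for $M$.

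\textbf{Comparison.} Your argument is independent of all the explicit defect-space computations and is much more general. The same calculation gives $\Sigma_j(VM)=\Sigma_j(M)$ in $[0,\infty]$ for every submodule $M\subset H^2(\mathbb D^2)$. It also explains why the computed $F$ had to be symmetric. The paper's route is shorter once the closed form is available, and it doubles as a consistency check on that formula.

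\textbf{A wording point.} Calling $V$ a ``unitary equivalence of submodules'' is loose. $V$ intertwines $(R_z,R_w)$ on $M$ with $(R_w,R_z)$ on $VM$, not with $(R_z,R_w)$. Your computation tracks this swap correctly, so the proof itself is unaffected.
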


\begin{proof}
If $k=\ell$, the assertion is immediate.  We therefore assume
$k\neq\ell$.  For $j=0$, both sides are equal to
$\frac{\pi^2}{6}$.

For $j\geq1$, formula~\eqref{eq:Sigma-F} and the symmetry of $F$ give
\[
\begin{aligned}
\Sigma_j([z^k-w^\ell])
&=
F\left(
\left\lceil\frac j\ell\right\rceil,
\left\lceil\frac jk\right\rceil
\right)=
F\left(
\left\lceil\frac jk\right\rceil,
\left\lceil\frac j\ell\right\rceil
\right)=
\Sigma_j([z^\ell-w^k]).
\end{aligned}
\]
Hence the two complete sequences coincide.

There is also a natural operator-theoretic explanation.  Define
\[
(Uf)(z,w)=f(w,z),
\qquad f\in H^2(\mathbb D^2).
\]
Then $U$ is unitary and
\[
U(z^k-w^\ell)
=
w^k-z^\ell
=
-(z^\ell-w^k).
\]
Since multiplication of a generator by a nonzero scalar does not
change the generated principal submodule,
\[
U[z^k-w^\ell]
=
[z^\ell-w^k].
\]
Moreover, $U$ implements interchange of the two coordinate
multipliers:
\[
UT_z=T_wU,
\qquad
UT_w=T_zU.
\]
Thus the transformation exchanges the ordered coordinate pair
$(T_z,T_w)$ with $(T_w,T_z)$ and simultaneously exchanges the
exponents $k$ and $\ell$.  This coordinate-swap symmetry is consistent
with \eqref{eq:symmetry-sequence}.  Notice, however, that it should
not be confused with unitary equivalence as
$\mathbb C[z,w]$-modules when the ordered coordinate actions are kept
fixed.
\end{proof}

Combining Theorem~\ref{thm:parameter-recovery} and
Theorem~\ref{thm:ordered-not-recovered}, we obtain the precise
parameter-recovery statement:
\[
\{\Sigma_j(M_{k,\ell})\}_{j\geq0}
\quad\text{determines}\quad
\{k,\ell\},
\quad\text{but not the ordered pair }(k,\ell).\]


\begin{corollary}
\label{cor:arithmetic-consequences}
Let
\[
d=\gcd(k,\ell),
\qquad
k=dk_0,
\qquad
\ell=d\ell_0,
\qquad
\gcd(k_0,\ell_0)=1.
\]
Then the descent set determines $d$, while the complete numerical
invariant sequence determines the normalized unordered pair
$\{k_0,\ell_0\}$ and hence the numerical semigroup
$\langle k_0,\ell_0\rangle$.
\end{corollary}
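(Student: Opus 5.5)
The plan is to read off $d$ from the descent set by separating into the two cases of Theorem~\ref{thm:recover-semigroup}, and then to use Theorem~\ref{thm:parameter-recovery} to obtain the normalized pair.

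\emph{Recovering $d$ from $\mathcal D_{k,\ell}$.} Put $a=\min\mathcal D_{k,\ell}$; by Lemma~\ref{lem:recover-min}, $a=\min\{k,\ell\}$.

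In case (i) of Theorem~\ref{thm:recover-semigroup}, i.e.\ $\mathcal D_{k,\ell}\setminus a\mathbb N\neq\varnothing$, Proposition~\ref{prop:recover-nondivisible} gives $b=\max\{k,\ell\}=\min(\mathcal D_{k,\ell}\setminus a\mathbb N)$. Hence $\{k,\ell\}=\{a,b\}$ is known and $d=\gcd(a,b)$ is determined.

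In case (ii), i.e.\ $\mathcal D_{k,\ell}=a\mathbb N$, the larger exponent is a multiple of $a$, so $d=\gcd(a,qa)=a$. Again $d$ is a function of $\mathcal D_{k,\ell}$ alone.

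A cleaner uniform description is $d=\gcd(\mathcal D_{k,\ell})$. Every element of $\mathcal D_{k,\ell}=k\mathbb N\cup\ell\mathbb N$ is divisible by $d$, and both $k$ and $\ell$ lie in the set. So the gcd of all elements of $\mathcal D_{k,\ell}$ equals $\gcd(k,\ell)=d$. I would state this formula, since it avoids the case split entirely.

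\emph{Recovering the normalized pair and semigroup.} Suppose two pairs have the same complete numerical invariant sequence. By Theorem~\ref{thm:parameter-recovery}, the unordered pair $\{k,\ell\}$ is determined. Dividing by $d=\gcd(k,\ell)$, which is symmetric in $k$ and $\ell$, determines $\{k_0,\ell_0\}=\{k/d,\ell/d\}$.

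Since $\gcd(k_0,\ell_0)=1$, the semigroup $\langle k_0,\ell_0\rangle$ is a numerical semigroup. It depends only on the unordered pair, so it is determined as well.

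\emph{Expected obstacle.} There is no genuine obstacle. The one point needing care is the degenerate case $k_0=1$ (i.e.\ $d=\min\{k,\ell\}$, case (ii)). There $\langle k_0,\ell_0\rangle=\mathbb N_0$ carries no information, and $\ell_0=q$ itself must be recovered through Theorem~\ref{thm:recover-q}, not through the descent set. This is why the corollary attributes the normalized pair to the full sequence rather than to $\mathcal D_{k,\ell}$.
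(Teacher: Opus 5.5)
Your proposal is correct and matches the paper's proof. The paper recovers $d$ as the gcd of the elements of $\mathcal D_{k,\ell}=k\mathbb N\cup\ell\mathbb N$, which is your ``cleaner uniform description'' and makes the case split unnecessary. It then obtains $\{k_0,\ell_0\}=\{k/d,\ell/d\}$, and hence $\langle k_0,\ell_0\rangle$, from Theorem~\ref{thm:parameter-recovery}, exactly as you do.
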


\begin{proof}
Since
$\mathcal D_{k,\ell}=k\mathbb N\cup\ell\mathbb N$,
we have
$d=\gcd\{j:j\in D_{k,\ell}\}$.
Thus $d$ is determined by the descent set. By
Theorem~\ref{thm:parameter-recovery}, the complete invariant sequence
determines $\{k,\ell\}$, and therefore also
\[
\{k_0,\ell_0\}
=
\left\{\frac{k}{d},\frac{\ell}{d}\right\}.
\]
The assertion concerning $\langle k_0,\ell_0\rangle$ follows
immediately.
\end{proof}

When $\gcd(k,\ell)=1$ and $k,\ell>1$, the classical formulas for the
two-generated numerical semigroup $\langle k,\ell\rangle$ give
\[
\operatorname{Frob}\langle k,\ell\rangle=k\ell-k-\ell,
\]
\[
c\bigl(\langle k,\ell\rangle\bigr)=(k-1)(\ell-1),
\]
and
\[
g\bigl(\langle k,\ell\rangle\bigr)
=
\frac{(k-1)(\ell-1)}{2};
\]
see, for example, \cite{RosalesGarciaSanchez2009}. Hence, in the coprime case $k,\ell>1$, the descent set itself---and
therefore also the complete numerical invariant sequence---determines
these classical semigroup invariants.


\medskip

\subsection*{AI Disclosure}
The research questions and the main mathematical ideas of this work originated with the authors. ChatGPT 5.6 Sol (OpenAI) was used as an auxiliary tool during the development of the manuscript to explore and check some intermediate calculations and arguments, and to assist with the organization, exposition, and language editing of the text. All AI-assisted mathematical content was independently reviewed and verified by the authors. The authors take full responsibility for the results, proofs, references, and final text of the article.

\medskip

\subsection*{Acknowledgment} The first author would like to thank Professor Chao Zu for his support and assistance during the preparation of this manuscript.
Y. Liu was supported by the Natural Science Foundation Project in Henan Province (No. 262300421861), the funding program for young backbone teachers in higher education institutions in Henan Province (No. 2024GGJS106), the key research projects of higher education institutions in Henan Province (No. 25B110009) and the general project cultivation fund of Nanyang Normal University (No. 2025PY034). Y. Lu was supported by NSFC (Grant Nos. 12031002 and 12671149). Y. Yang  was supported by NSFC (Grant No.12471117).

%
\subsection*{Conflict of interest}
The authors declare that they have no conflict
of interest. 
\subsection*{Data availability statement}
Data sharing is not applicable to this article as no new data were created or analyzed in this study.

\end{document}